\documentclass[12]{amsart}
\usepackage{amsmath,amssymb,amsthm,color,enumerate,comment,centernot,enumitem,url,cite}
\usepackage{graphicx,relsize,bm}
\usepackage{mathtools}
\usepackage{array}

\makeatletter
\newcommand{\tpmod}[1]{{\@displayfalse\pmod{#1}}}
\makeatother

\newtheorem{thm}{Theorem}[section]

\newtheorem{cor}[thm]{Corollary}

\theoremstyle{remark}

\theoremstyle{definition}

\theoremstyle{THM}

\newcommand{\CC}{{\mathcal C}}
\newcommand{\W}{{\mathcal W}}

\newcommand{\SSS}{{\mathcal S}}
\newcommand{\RR}{{\mathcal R}}
\newcommand{\FF}{{\mathcal F}}
\newcommand{\Gal}{{\mbox{{\rm{Gal}}}}}

\newcommand{\mmod}[1]{\ \mathrm{mod}\enspace #1}
\newcommand{\Z}{{\mathbb Z}}
\newcommand{\Q}{{\mathbb Q}}

\newcommand{\F}{{\mathbb F}}

\makeatletter
\@namedef{subjclassname@2020}{%
  \textup{2020} Mathematics Subject Classification}
\makeatother

\def\red#1 {\textcolor{red}{#1 }}
\def\blue#1 {\textcolor{blue}{#1 }}

\numberwithin{equation}{section}

\begin{document}

\title[Monogenic trinomials ${x^{12}+ax^6+b}$]{Characterizing monogenic trinomials $\boldsymbol{x^{12}+ax^6+b}$ according to their Galois groups}


\author{Lenny Jones}
\address{Professor Emeritus, Department of Mathematics, Shippensburg University, Shippensburg, Pennsylvania 17257, USA}
\email[Lenny~Jones]{doctorlennyjones@gmail.com}

\date{\today}

\begin{abstract}
    Let $f(x)=x^{12}+ax^{6}+b\in {\mathbb Z}[x]$, with $ab\ne 0$. We say that $f(x)$ is {\em monogenic} if $f(x)$ is irreducible over ${\mathbb Q}$ and
    $\{1,\theta,\theta^2,\ldots,\theta^{11}\}$
    is a basis for the ring of integers of ${\mathbb Q}(\theta)$, where $f(\theta)=0$. For each possible Galois group $G$ of $f(x)$ over ${\mathbb Q}$, we give explicit descriptions of all monogenic trinomials $f(x)$ having Galois group $G$. These results extend recent work on monogenic power-compositional quartic and sextic trinomials.
   \end{abstract}

\subjclass[2020]{Primary 11R09, 11R04; Secondary 11R32, 11R21}
\keywords{monogenic, even sextic, trinomial, Galois, power-compositional}

\maketitle
\section{Introduction}\label{Section:Intro}

We say that a monic polynomial $F(x)\in \Z[x]$ is {\em monogenic} if $F(x)$ is irreducible over $\Q$ and  $\{1,\theta,\theta^2,\ldots,\theta^{\deg(F)-1}\}$
  is a basis for the ring of integers, $\Z_K$, of $K={\mathbb Q}(\theta)$, where $F(\theta)=0$. 
  It is well known \cite{Cohen} that 
  \begin{equation} \label{Eq:Dis-Dis}
\Delta(F)=\left[\Z_K:\Z[\theta]\right]^2\Delta(K),
\end{equation} where $\Delta(F)$ and $\Delta(K)$ denote the discriminants over $\Q$, respectively, of $F(x)$ and the number field $K$. We refer to $\left[\Z_K:\Z[\theta]\right]$ as the {\em index} of $F(x)$, and we denote it index($F$). 
Thus, from \eqref{Eq:Dis-Dis}, 
\begin{equation}\label{Mono}
F(x) \ \mbox{is monogenic if and only if} \ \Delta(F)=\Delta(K),\  \mbox{or equivalently,} \  \Z_K=\Z[\theta].
\end{equation}

 Throughout this article, for $a,b\in \Z$ with $ab\ne 0$ and $j\in \{1,2,3\}$, we let 
 \begin{align}\label{Basics}
\begin{split}
 f(x):&=x^{12}+ax^6+b, \quad g_{2j}(x):=x^{2j}+ax^j+b, \quad r(x):=x^3-3bx+ab,\\
 \alpha:&=a+2\sqrt{b}, \quad \beta:=a-2\sqrt{b}, \quad \delta:=a^2-4b, \quad \W:=\delta/\gcd(2,a)^2.
\end{split}
\end{align}
Observe that if $f(x)$ is irreducible over $\Q$, then the trinomials $g_{2j}(x)$ are irreducible over $\Q$ as well. Note also that $r(x)$ is a cubic resolvent for $g_6(x)$.

 We use the notation 4T$n$, 6T$n$ and 12T$n$ \cite{BM}, to denote the possible Galois groups $G_4$, $G_6$ and $\Gal(f)$, 
for respectively,  the trinomials $g_4(x)$, $g_6(x)$ and $f(x)$. These possible groups \cite{KW,HJActa,Chen} are given in Table \ref{T:PG}. 
\begin{table}[h]  
 \begin{center}
\begin{tabular}{c|cccccccccccccccc}
 & \multicolumn{16}{c}{$n$} \\ \hline
  4T$n$ & 1 & 2 & 3 & \\ [.25em]
 6T$n$ & 1 & 2 & 3  & 5 & 9 \\  [.25em]
 12T$n$ &  2 & 3 & 10 & 11 & 12 & 13 & 14 & 15 & 16 & 18 & 28 & 37 & 38 & 39 & 42 & 81
 \end{tabular}
\end{center}
\caption{Possible Galois groups for $g_4(x)$, $g_6(x)$ and $f(x)$}
 \label{T:PG}
\end{table}\\
For integers $r$ and $N$, with $N>1$, we let the notation ``$r \mmod{N}$" denote the unique integer $z\in \{0,1,2,\ldots,N-1\}$ such that $r\equiv z \pmod{N}$. That is, $r \mmod{N}=z$.

In this article, we provide a complete characterization of the monogenic trinomials $f(x)$ according to $\Gal(f)\simeq$ 12T$n$ for each value of $n$ given in Table \ref{T:PG}.  More precisely, we prove the following: 
\begin{thm}\label{Thm:Main}
Suppose that $f(x)$ is irreducible over $\Q$, and that $\Gal(f)\simeq$ $12{\rm T}n$, for some $n$ in Table \ref{T:PG}. If $n\in \{3,11,12,13,14,15,16,18,37\}$, then $f(x)$ is not monogenic. If $n\in \{2,10,28,38,39,42,81\}$, then $f(x)$ is monogenic with $\Gal(f)\simeq$ 
\begin{enumerate}
  \item \label{M:I2} $12{\rm T}2$ if and only if $f(x)=x^{12}-x^6+1$; 
  \item \label{M:I10} $12{\rm T}10$ if and only if $f(x)=x^{12}+ax^6+1$, where $a$ satisfies the conditions 
 \[\CC_{10}:=\{a\ne -1,\  a \mmod{4}\in \{0,3\},\ a\mmod{9}\ne 0, \  \W \ \mbox{is squarefree}\};\]
 \item \label{M:I28} $12{\rm T}28$ if and only if either $f(x)\in \{x^{12}+2x^6+2,\ x^{12}-2x^6+2\}$ or\\
    $f(x)=x^{12}+ax^6-1$, where $a$ satisfies the conditions
     \[\CC_{28}:=\{a \mmod{4}\ne 0,\ a\mmod {9}\not \in \{0,4,5\},\ \W \ \mbox{is squarefree}\};\]
   \item \label{M:I38} $12{\rm T}38$ if and only if 
   \[f(x)\in \{x^{12}\pm 4x^6-2,\ x^{12}\pm 4x^6+6\}\] or $f(x)=x^{12}+ax^6-3$, where $a$ satisfies the conditions 
   \[\CC_{38}:=\{a \mmod{4}\in \{0,3\},\ a\mmod{9}\not \in \{2,7\},\ \W \ \mbox{is squarefree}\};\]
   \item \label{M:I39} $12{\rm T}39$ if and only if $f(x)\in \{x^{12}\pm 4x^6+2,\ x^{12}-5x^6+5\}$.
   \item \label{M:I42} $12{\rm T}42$ if and only if $f(x)=x^{12}+ax^6+(a^2+3)/4$, where $a$ satisfies the conditions 
    \[\CC_{42}:=\{a\mmod{8}\in \{3,5,7\}, \ a\ne -1,\ (a^2+3)/4 \ \mbox{is squarefree}\};\]
   \item \label{M:I81} $12{\rm T}81$ if and only if $a$ and $b$ satisfy the conditions
   \[\CC_{81}:=\{b\ne 1, \ b \ \mbox{and} \ \W \ \mbox{are squarefree}, \ -3b, \ -3b\delta \ \mbox{and} \ b\delta \ \mbox{are not squares}\},\]
   such that $(a \mmod{4}, \ b \mmod{4})\in \RR$ and $(a \mmod{9}, \ b \mmod{9})\in \SSS$, where
    \[\RR=\{(0,1),(0,2),(2,2),(2,3),(1,3),(3,2),(3,1),(3,3)\} \ \mbox{and}\] $\SSS=\SSS_1 \cup \SSS_2\cup \SSS_3\cup\SSS_4$, with 
    $\SSS_1=\{(0,3),(0,6),(3,3),(3,6),(6,3),(6,6)\}$, 
    \begin{align*}
    \SSS_2:&=\{(0,2),(0,4),(0,5),(0,7),(3,1),(3,4),\\
    & \qquad (3,7),(3,8),(6,1),(6,4),(6,7),(6,8)\},
    \end{align*} 
    \[\SSS_3:=\{(1,3),(1,6),(2,3),(4,6),(5,6),(7,3),(8,3),(8,6)\}\] and 
    \begin{align*}
   \SSS_4:&=\{(1,1),(1,2),(1,4),(1,5),(1,8), (2,2),(2,4),(2,5),(2,7),(2,8),\\
     & \qquad (4,1),(4,2),(4,5),(4,7), (5,1),(5,2),(5,5),(5,7),(7,2),(7,4), \\
    & \qquad (7,5),(7,7),(7,8),(8,1),(8,2),(8,4),(8,5),(8,8)\}.
   \end{align*}
  \end{enumerate} 
\end{thm}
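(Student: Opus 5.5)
We split the proof into two independent ingredients and then intersect them: (A) a complete description of when $f(x)=x^{12}+ax^6+b$ is monogenic, and (B) a description of $\Gal(f)$ in terms of $a$, $b$, and the groups $G_4$ and $G_6$ of $g_4$ and $g_6$.

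\textbf{(A) Monogenicity.} We use the standard discriminant formula for trinomials. Here
\[\Delta(f)=2^{12}3^{12}b^{5}\delta^{6}\]
up to sign, since $\Delta(x^{2m}+ax^m+b)=\pm m^{2m}b^{m-1}(a^2-4b)^m$. We then apply Dedekind's criterion (equivalently, the Jakhar--Khanduja--Sangwan conditions for trinomials $x^n+Ax^m+B$) at each prime $p$ dividing $\Delta(f)$.
\begin{enumerate}
\item For $p\mid b$, $p\nmid 6$, the criterion should force $b$ squarefree.
\item For $p\mid \delta$, $p\nmid 6b$, it should force $p^2\nmid \delta$.
\item The primes $2$ and $3$ need separate treatment. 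For $p=2$ we reduce $f$ modulo $2$ as $(x^3+\cdots)^4$-type factorizations. For $p=3$ we write $f\equiv (x^4+ax^2+b)^3 \pmod 3$.
\end{enumerate}
These computations should yield: $f$ is monogenic if and only if $b$ and $\W$ are squarefree and $(a\bmod 4,b\bmod 4)\in\RR$ and $(a\bmod 9,b\bmod 9)\in\SSS$. Here $\SSS_1,\ldots,\SSS_4$ correspond to the cases $3\mid a,3\mid b$; $3\mid a,3\nmid b$; $3\nmid a,3\mid b$; and $3\nmid ab$. The 2-adic and 3-adic cases will be handled by explicitly computing the remainder $f(x)-\prod \phi_i(x)^{e_i}$ divided by $p$ and testing coprimality modulo $p$.

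\textbf{(B) Galois groups.} We take the classification of $\Gal(f)$ from the cited works \cite{HJActa,Chen}, or reprove it via the tower $\Q\subset\Q(\sqrt\delta)\subset\Q(\theta^6)\subset\Q(\theta)$. Each 12T$n$ should be characterized by conditions on which of $b$, $-3b$, $\delta$, $b\delta$, $-3b\delta$, $-3\delta$ are squares, whether $\alpha$ or $\beta$ is a cube in $\Q(\sqrt b)$, and whether $r(x)$ is reducible. For example:
\begin{itemize}
\item $12\mathrm{T}81$ is the generic case, where none of these degeneracies occur.
\item $12\mathrm{T}2$, $12\mathrm{T}10$ and $12\mathrm{T}28$ arise when $b$ is a square or $-3b$ is a square, etc.
\end{itemize}

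\textbf{Intersecting (A) and (B).} For each $n$ we intersect the Galois condition with the monogenicity condition.

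For the nonmonogenic $n\in\{3,11,\ldots,37\}$, the Galois condition forces a square factor. Typically $b$ or $\delta$ is a nontrivial square (or a square times a small constant), or the $2$- or $3$-adic residues fall outside $\RR$ or $\SSS$. Either way monogenicity fails.

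When $b$ squarefree is combined with a condition such as "$b$ is a square" or "$-3b$ is a square", $b$ is pinned to a finite set:
\begin{itemize}
\item $b=1$ gives $12\mathrm{T}10$, with $12\mathrm{T}2$ when $a=-1$.
\item $b=-1$ gives $12\mathrm{T}28$, and $b=-3$ gives $12\mathrm{T}38$.
\item Conditions such as "$b\delta$ is a square" with $b$, $\W$ squarefree force $\delta=b\cdot(\text{power of }2)$. This gives the sporadic finite lists, e.g.\ $b=2$, $a=\pm 4$, and $a=-5$, $b=5$.
\item "$-3\delta$ is a square" gives $a^2+3=4b$, i.e.\ the $12\mathrm{T}42$ family.
\end{itemize}
Substituting the fixed $b$ into $\RR$ and $\SSS$ yields the residue conditions $\CC_n$. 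For instance, $b=1$ with $\RR$ gives $a\bmod 4\in\{0,3\}$, and with $\SSS_2$/$\SSS_4$ gives $a\bmod 9\neq 0$. The exclusions such as $a\ne -1$ remove the cases that drop to smaller groups.

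\textbf{Main obstacle.} The hardest step is the bookkeeping in (A) at $p=2,3$: producing exactly the sets $\RR$ and $\SSS$. This is followed by the finite Diophantine step that reduces $\delta=\pm 2^k b$ or $\delta=-3\cdot\square$ (with $\W$ squarefree) to the sporadic lists. We expect to handle the latter by bounding the power of $2$ via $\W$ squarefree and solving the resulting quadratic equations directly.
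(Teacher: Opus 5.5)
Your part (A) is correct, and it takes a different route from the paper. The paper never states a uniform monogenicity criterion. It uses Theorem~\ref{Thm:KKR} to reduce to $g_4$ and $g_6$, then reads off the Galois-stratified lists of monogenic $g_4$ and $g_6$ (Theorems~\ref{Thm:Main g_4} and~\ref{Thm:Main g_6}) case by case. Your criterion can be confirmed quickly. The JKS conditions at $q=2$ for $f=g_4(x^3)$ and for $g_4$ are equivalent: they use the same $a_2,b_1,a_1,b_2$, and the $H_1,H_2$ for $f$ are those of $g_4$ evaluated at $x^3$, which preserves coprimality. The same holds at $q=3$ for $f=g_6(x^2)$ versus $g_6$. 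So $f$ is monogenic iff $g_4$ and $g_6$ both are, which is exactly ``$b,\W$ squarefree, $\RR$, $\SSS$''. This buys a lot. Under the standing hypothesis that $\Gal(f)$ is 12T81, item (7) is immediate. The paper's extra $q=2$ checks (e.g.\ $a\bmod 8\in\{3,5,7\}$ for 12T42) are just $\RR$ again.

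The gap is in (B) and the intersection. Chen's theorem is phrased through $(G_4,G_6)$, and you give no way to determine $G_6$. Your list of degeneracies omits ``$b$ is a cube'' and ``$3\alpha$ or $3\beta$ is a square''; it has ``$\alpha$ or $\beta$ a cube'' instead. This is decisive. The family $b=-1$ (e.g.\ $x^{12}+x^6-1$) satisfies every condition in $\CC_{81}$ together with $\RR$ and $\SSS$, yet it is 12T28, because $b$ is a cube and so $G_6$ is 6T3.

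What you need is the following fact: if $-3\delta$ is not a square, then $G_6$ is 6T3 iff $b$ is a cube or $r(x)$ is reducible, and otherwise it is 6T9. A squarefree cube is $\pm1$. If $r(n)=0$, then $b\mid n$; writing $n=bk$ and $t=bk^2$ gives $a=bk(3-t)$ and $\delta=b(t-1)^2(t-4)$. Then $\W$ squarefree forces $(a,b)=(\pm2,2)$, or $(a,b)=(\mp4,-1)$, where $f$ is reducible. This is where $b\in\{-1,2\}$ and $x^{12}\pm2x^6+2$ in 12T28 come from; the paper reads this off Theorem~\ref{Thm:Main g_6}.

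Your sketch instead attributes $b=-1$ to ``$b$ or $-3b$ is a square'', which is false, and it never produces $x^{12}\pm2x^6+2$. The 12T38 sporadics come from ``$-3b\delta$ is a square'', which you don't treat. That case gives candidates $(\pm1,1),(\pm4,-2),(\pm4,6),(\pm11,33)$, and the last pair fails $\SSS$.

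Finally, 12T11, 12T12 and 12T13 are empty not because ``a square factor is forced''. The candidates $(\pm4,2),(-5,5)$ and $(\pm4,-2),(\pm4,6)$ all satisfy (A). They are excluded only because $r(x)$ is irreducible and $b$ is not a cube, so $G_6$ is 6T9 and they land in 12T39 and 12T38 respectively.
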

\noindent We point out that similar research involving the monogenicity and/or Galois groups of trinomials of various degrees has been conducted by many authors  \cite{AJ,AL,BS,HJJAA,HJBAMS,HJActa,HJIMPCT,HJOMGG,JonesQuarticsBAMS,JonesRam,JonesNYJM,JonesJAA,JonesAA,
JonesEvenSextics,JonesMCCT,JonesMEQT,JonesMEOPGG,JW,MNSU,S1,S2,Voutier}.

The proof of Theorem \ref{Thm:Main} relies first on the characterization of the Galois groups of $f(x)$ \cite{Chen}. The next step is to examine the monogenicity of $f(x)$, which is typically done by using a theorem due to Jakhar, Khanduja and Sangwan \cite{JKS2} that gives necessary and sufficient conditions to decide when a prime divisor of the discriminant does not divide index($f$). However, we are able to streamline this process here in the following ways. 
First, Chen \cite{Chen} has provided a characterization of the Galois groups of $f(x)$ in terms of the Galois groups of $g_4(x)$ \cite{KW} and $g_6(x)$ \cite{Cav,HJMS}, along with some additional restrictions on the coefficients $a$ and $b$ of $f(x)$. Second, a recent theorem due to Kaur, Kumar and Remete \cite{KKR} (see Theorem \ref{Thm:KKR}) that is designed specifically for determining the monogenicity of power-compositional polynomials, allows us to utilize the characterizations of the monogenic trinomials $g_{2j}(x)$. Some, but not all, of the foundation for this step has already been established. The characterization of the monogenic trinomials $g_6(x)$ has been given recently in \cite{HJActa}. However, only a partial characterization of the monogenic trinomials $g_4(x)$ has been given in \cite{HJBAMS,JonesQuarticsBAMS}, and so we complete this characterization here. For the sake of completeness, we also provide the characterization of the monogenic trinomials $g_2(x)$.   In many cases, this step narrows down considerably the possibilities for monogenic trinomials $f(x)$. Finally, we still need to check the monogenicity of the trinomials $f(x)$ that have survived up to this point, and we use a special case of the theorem of Jakhar, Khanduja and Sangwan \cite{JKS2} tailored for our specific situation (see Theorem \ref{Thm:JKS}) to accomplish this task. Another component of Theorem \ref{Thm:KKR}, which allows us to focus on the primes $q\in \{2,3\}$ when invoking Theorem \ref{Thm:JKS}, also helps to expedite the entire process.

\section{Preliminaries}\label{Section:Prelim}
Let $m\ge 1$ be an integer, and let ${\mathfrak F}(x)=x^{2m}+Ax^m+B\in \Z[x]$. Then, from a theorem due to Swan \cite{Swan}, we have that 
\begin{equation}\label{Dis} 
\Delta({\mathfrak F})=B^{m-1}m^{2m}(A^2-4B)^m. 
\end{equation} 
Note that since $\Delta(g_2)=a^2-4b=\delta$, we have from \eqref{Dis}, that 
\begin{equation*}\label{Dis Here} \Delta(g_4)=2^4\delta^2b,\quad \Delta(g_6)=3^6\delta^3b^2 \quad \mbox{and} \ \Delta(f)=2^{12}3^{12}\delta^6b^5.
\end{equation*} 

The following theorem is a special case of a theorem due to Jakhar, Khanduja and Sangwan \cite{JKS2} that is tailored to our specific situation. 
\begin{thm}{\rm \cite{JKS2}}\label{Thm:JKS}
Let $m\ge 1$ be an integer, and let 
${\mathfrak F}(x)=x^{2m}+Ax^m+B\in \Z[x]$ be irreducible over $\Q$. Let $\Z_K$ denote the ring of integers of $K=\Q(\theta)$, where ${\mathfrak F}(\theta)=0$. 
A prime factor $q$ of $\Delta({\mathfrak F})$ does not divide $\left[\Z_K:\Z[\theta]\right]$ if and only if $q$ satisfies one of the following conditions:
\begin{enumerate}
  \item \label{JKS:C1} when $q\mid A$ and $q\mid B$, then $q^2\nmid B$;
  \item \label{JKS:C2} when $q\mid A$ and $q\nmid B$, then
  \[\mbox{either } \quad q\mid a_2 \mbox{ and } q\nmid b_1 \quad \mbox{ or } \quad q\nmid a_2\left(a_2^2B+b_1^2\right),\]
  where $a_2=A/q$ and $b_1=\frac{B+(-B)^{q^j}}{q}$, such that $q^j\mid\mid 2m$ with $j\ge 1$;
  \item \label{JKS:C3} when $q\nmid A$ and $q\mid B$, then
  \[\mbox{either } \quad q\mid a_1 \mbox{ and } q\nmid b_2 \quad \mbox{ or } \quad q\nmid a_1b_2^{m-1}\left(Aa_1-b_2\right),\]
  where $b_2=B/q$ and $a_1=\frac{A+(-A)^{q^l}}{q}$, such that $q^l\mid\mid m$ with $l\ge 0$;
  \item \label{JKS:C4} when $q\nmid AB$ and $q\mid m$ with $m=sq^k$, $q\nmid s$, then the polynomials 
   \begin{equation*}
     H_1(x):=x^{2s}+Ax^s+B \quad \mbox{and}\quad H_2(x):=\dfrac{Ax^{sq^k}+B+\left(-Ax^s-B\right)^{q^k}}{q} 
   \end{equation*}
   are coprime in $\F_q[x]$; 
         \item \label{JKS:C5} when $q\nmid ABm$, then $q^2\nmid \left(A^2-4B\right)$.
   \end{enumerate}
\end{thm}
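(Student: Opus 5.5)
The plan is to obtain this result by specializing the general theorem of Jakhar, Khanduja and Sangwan \cite{JKS2} for an arbitrary irreducible trinomial $x^n+Ax^{m'}+B$ to the case $n=2m$, $m'=m$, and then simplifying each of its five conditions. No new ideas are needed; the work is to track exponents and discard factors that are automatically units modulo $q$. Recall that the general theorem splits a prime $q\mid\Delta$ into the same five cases according to whether $q$ divides $A$, $B$ and $m'$. In each case it gives an explicit criterion for $q\nmid[\Z_K:\Z[\theta]]$ involving $n$, $m'$, $n-m'$ and auxiliary quantities such as $q$-adic valuations of $n$, $m'$, $n-m'$. Setting $n=2m$ makes $n-m'=m$, which is the source of all the simplifications below.

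I would treat the cases in turn.

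Case (1), $q\mid A$ and $q\mid B$, is unchanged: the criterion $q^2\nmid B$ does not involve $n$ or $m'$.

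Case (2), $q\mid A$ and $q\nmid B$: the general statement uses $q^j\mid\mid n=2m$ and $b_1=(B+(-B)^{q^j})/q$. Its second alternative is a condition of the form
\[
q\nmid a_2\bigl(\pm a_2^{\,q^j}(-B)^{e}+b_1^{\,q^j}\bigr)
\]
with exponents determined by $n/q^j$ and $m'$. I would reduce these exponents modulo $q-1$ using Fermat's little theorem, since $a_2,b_1,B$ are only needed in $\F_q$. I would also use that $\gcd(n,m')=m$ forces the relevant quotient to be $2$. The expected outcome is the stated condition $q\nmid a_2(a_2^2B+b_1^2)$. Signs may need care when $q=2$, but there $\pm$ is irrelevant.

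Case (3), $q\nmid A$ and $q\mid B$: the general statement uses $q^l\mid\mid(n-m')$, which here is $q^l\mid\mid m$. Its factor $b_2^{\,n-m'-1}$ becomes $b_2^{\,m-1}$. The remaining factor simplifies to $Aa_1-b_2$ after the analogous reduction modulo $q$.

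Case (4), $q\nmid AB$ and $q\mid m$: writing $n=2sq^k$ and $m=sq^k$ (note that $q\mid m$ implies $q^k\mid\mid$ both $n$ and $m$ when $q$ is odd, and I would check separately that the formulation is consistent when $q=2$), the general polynomials $x^{n/q^k}+Ax^{m/q^k}+B$ and the associated $(Ax^{m}+B+(-Ax^{s}-B)^{q^k})/q$ become exactly $H_1$ and $H_2$.

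Case (5), $q\nmid ABm$: the general criterion is $q^2\nmid D$, where
\[
D=n^{n}B^{\,n-m'}-(-1)^{m'}m'^{\,m'}(n-m')^{\,n-m'}A^{n},
\]
appropriately divided by $\gcd$ powers. With $n=2m$ this becomes, after pulling out $m^{2m}$ and using $\gcd(n,m')=m$ to take $m$-th roots in the discriminant formula \eqref{Dis}, essentially $(A^2-4B)^m$ up to units. The point is that the genuinely relevant quantity is the reduced discriminant $4B-A^2$ rather than its $m$-th power. I would verify this via the "$d$-reduction" built into the general theorem, which replaces $n,m'$ by $n/d,m'/d$ with $d=\gcd(n,m')$; here that gives the quadratic $x^2+Ax+B$. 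Hence the criterion is $q^2\nmid(A^2-4B)$.

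I expect case (5) to be the main obstacle, together with the exponent bookkeeping in case (2). These are the places where one must be sure how the general theorem handles $d=\gcd(n,m')>1$, and must avoid erroneously imposing $q^2\nmid(A^2-4B)^m$. I would first confirm the reduced form against the formula \eqref{Dis}. I would then sanity-check it on a small example, such as $x^4+Ax^2+B$ with $q\mid A^2-4B$ exactly once, before finalizing.
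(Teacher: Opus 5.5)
This is the paper's route. The paper gives no argument of its own: it simply presents the statement as the specialization $n=2m$, $m'=m$ of the Jakhar--Khanduja--Sangwan theorem. Your cases (1) and (4) are fine. In (4), JKS only requires $q\nmid\gcd(s',s)$ with $s'=2s$, so $q=2$ needs no separate check. Your observation that $\gcd(n,m')=m$ makes the relevant quotient $2$ is the key point. However, several formulas you attribute to the general theorem are misremembered, and as written they would not produce the stated conditions.

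In JKS, conditions (ii), (iii) and (v) are already expressed through $N_1=n/\gcd(n,m')$ and $M_1=m'/\gcd(n,m')$, which here are $N_1=2$ and $M_1=1$. So the specialization is direct substitution, with no Fermat step:
\[
(-B)^{M_1}a_2^{N_1}-(-b_1)^{N_1}=-(a_2^2B+b_1^2),\qquad (-A)^{M_1}a_1^{N_1-M_1}-(-b_2)^{N_1-M_1}=-(Aa_1-b_2),
\]
\[
N_1^{N_1}B^{N_1-M_1}-(-1)^{M_1}M_1^{M_1}(M_1-N_1)^{N_1-M_1}A^{N_1}=4B-A^2 .
\]
The errors in your version are these:
\begin{itemize}
\item In case (2), you put exponents $q^j$ on $a_2$ and $b_1$. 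After reducing exponents modulo $q-1$, that expression becomes linear in $a_2,b_1$, not $a_2^2B+b_1^2$.
\item In case (5), you have $(n-m')^{n-m'}$ where the correct factor is $(m'-n)^{n-m'}$; equivalently, the sign should be $(-1)^{n}$, not $(-1)^{m'}$. After the $\gcd$-reduction your version gives $4B+A^2$, which is wrong.
\item The unreduced quantity is $m^{2m}\bigl((4B)^m-A^{2m}\bigr)$, not $(A^2-4B)^m$ up to units. It does give the same criterion, but the reason is that $(4B)^m-A^{2m}=(4B-A^2)\Phi$ with $\Phi\equiv mA^{2m-2}\not\equiv 0\pmod{q}$ whenever $q\mid A^2-4B$ and $q\nmid ABm$. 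It is not obtained by extracting $m$-th roots.
\item The exponent on $b_2$ in (iii) is $m'-1$, the multiplicity of $x$ modulo $q$, not $n-m'-1$. The two agree here, so this one is harmless.
\end{itemize}
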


We now use Theorem \ref{Thm:JKS} to present a complete description of the monogenic trinomials $g_2(x)$. Of course, in this situation, we have that $\Gal(g_2)\simeq 2{\rm T}1$, the cyclic group of order 2, for all irreducible trinomials $g_2(x)$. 
\begin{thm}\label{Thm:Main g_2} The trinomial $g_2(x)$ is monogenic if and only if $\W$ is squarefree and 
 \[(a \mmod{4}, \ b\mmod{4})\in \{(0,1),(0,2),(2,2),(2,3)\} \quad \mbox{if $2\mid a$.}\]
\end{thm}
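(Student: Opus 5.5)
We apply Theorem \ref{Thm:JKS} with $m=1$, $A=a$, $B=b$, so $\Delta(g_2)=\delta$. Irreducibility of $g_2(x)$ is implicit: if $\W$ is squarefree and $\W\ne 1$, then $\delta$ is not a square. The degenerate cases $\W=1$ (and $\W=0$) need a short separate check; we expect them to be excluded by the congruence conditions or by irreducibility. The index of $g_2$ can only be divisible by primes $q\mid\delta$, and we examine each such prime. Since $m=1$, condition (\ref{JKS:C4}) never occurs. For an odd prime $q\mid\delta$, we have $q\mid a$ if and only if $q\mid b$, so only conditions (\ref{JKS:C1}) and (\ref{JKS:C5}) are relevant. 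If $q\nmid ab$, condition (\ref{JKS:C5}) says $q^2\nmid\delta$. If $q\mid a$ and $q\mid b$, condition (\ref{JKS:C1}) says $q^2\nmid b$, and since $q^2\mid a^2$, this is equivalent to $q^2\nmid \delta$. So odd primes contribute exactly the condition that the odd part of $\delta$, equivalently of $\W$, is squarefree.

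Next take $q=2$. If $a$ is odd, then $\delta$ is odd, so $2\nmid\Delta(g_2)$ and $\W=\delta$; the criterion then reduces to $\W$ squarefree, with no condition mod $4$. If $a$ is even, then $2\mid\delta$ always. If also $b$ is even, condition (\ref{JKS:C1}) requires $4\nmid b$, i.e.\ $b\equiv 2\pmod 4$. Conversely, this gives $\W=(a/2)^2-b\equiv (a/2)^2+2\pmod 4$, which is automatically not divisible by $4$; its $2$-part is compatible with squarefreeness. If $b$ is odd, we use condition (\ref{JKS:C2}) with $j=1$ (since $2\,\|\,2m=2$), $a_2=a/2$, and $b_1=(b+b^2)/2$. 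When $a\equiv 0\pmod 4$, $a_2$ is even, and we need $b_1$ odd, i.e.\ $b(b+1)/2$ odd, i.e.\ $b\equiv 1\pmod 4$. When $a\equiv 2\pmod 4$, $a_2$ is odd, and we need $a_2^2b+b_1^2$ odd, i.e.\ $b_1$ even, i.e.\ $b\equiv 3\pmod 4$. So the $2$-condition is exactly $(a\mmod{4},b\mmod{4})\in\{(0,1),(0,2),(2,2),(2,3)\}$.

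Finally, we reconcile these conditions with ``$\W$ squarefree'' when $a$ is even, where $\W=(a/2)^2-b$; this is the main bookkeeping step. In each of the four allowed residue pairs we check $\W\not\equiv 0\pmod 4$: for $(0,1)$, $\W\equiv 3$; for $(0,2)$, $\W\equiv 2$; for $(2,2)$, $\W\equiv 3$; for $(2,3)$, $\W\equiv 2\pmod 4$. So the $2$-part of $\W$ is squarefree there, and the odd part condition is the odd-prime condition above. Hence, for even $a$, the statement ``$g_2$ monogenic $\iff$ odd part of $\delta$ squarefree and the pair lies in the list'' is equivalent to ``$\W$ squarefree and the pair lies in the list''. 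Combining both directions over all primes dividing $\delta$, via \eqref{Mono}, gives the theorem.
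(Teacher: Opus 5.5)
Your proof is correct and is essentially the paper's argument. You apply Theorem \ref{Thm:JKS} with $m=1$, use conditions \eqref{JKS:C1} and \eqref{JKS:C5} for odd $q$, and use conditions \eqref{JKS:C1} and \eqref{JKS:C2} (with $j=1$) for $q=2$, which gives the same four residue pairs; your explicit check that $\W\equiv 2$ or $3 \pmod 4$ in those pairs makes explicit a step the paper leaves implicit.

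On the degenerate case you deferred: that same check shows $\W\ne 1$ when $a$ is even. For odd $a$, however, $\W=1$ genuinely occurs with $g_2$ reducible (e.g.\ $x^2+3x+2$), so the statement must be read with $g_2$ assumed irreducible, as the paper tacitly does when it invokes Theorem \ref{Thm:JKS}.
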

\begin{proof}
   Let $q$ be a prime divisor of $\Delta(g_2)=a^2-4b=\delta$.  Observe that conditions \eqref{JKS:C3} and \eqref{JKS:C4} of Theorem \ref{Thm:JKS} are not possible. Suppose first that $2\nmid a$, so that $q\ge 3$ and $\W=\delta$. Note then that condition \eqref{JKS:C2} of Theorem \ref{Thm:JKS} is not applicable since $q\ge 3$. Finally, we see that conditions \eqref{JKS:C1} and \eqref{JKS:C5} of Theorem \ref{Thm:JKS} are satisfied if and only if $\W$ is squarefree. Suppose next that $2\mid a$, and let $q=2$.  If $2\mid a$ and $2\mid b$, then condition \eqref{JKS:C1} is satisfied if and only if 
   \[(a \mmod{4},\ b\mmod{4})\in \{(0,2),(2,2)\}.\] Suppose then that $2\mid a$ and $2\nmid b$. Then, we have from condition \eqref{JKS:C2} of Theorem \ref{Thm:JKS} that $j=1$, and 
   \[b_1=(b+b^2)/2\equiv \left\{\begin{array}{cl}
   1 \pmod{2} & \mbox{if $b\equiv 1\pmod{4}$,}\\[.5em]
   0 \pmod{2} & \mbox{if $b\equiv 3\pmod{4}$.}
   \end{array}\right.\] If $2\mid a_2$, then $2\nmid {\rm index}(g_2)$ 
   if and only if $b\equiv 1 \pmod{4}$. That is, in this case we have 
   \[(a \mmod{4},\ b\mmod{4})=(0,1).\] 
   If $2\nmid a_2$, then 
   \[a_2^2b+b_1^2\equiv \left\{\begin{array}{cl}
   0 \pmod{2} & \mbox{if $b\equiv 1\pmod{4}$,}\\[.5em]
   1 \pmod{2} & \mbox{if $b\equiv 3\pmod{4}$.}
   \end{array}\right.\]
   Thus, in this case, $2\nmid {\rm index}(g_2)$ if and only if $b\equiv 3 \pmod{4}$, or
   \[(a \mmod{4},\ b\mmod{4})=(2,3).\]
    Finally, as in the case when $2\nmid a$, we deduce from conditions \eqref{JKS:C1} and \eqref{JKS:C5} of Theorem \ref{Thm:JKS} that $q\nmid {\rm index}(g_2)$ if and only if $\W$ is squarefree when $q\ge 3$. 
\end{proof}

The next theorem targets power-compositional situations and is due to Kaur, Kumar and Remete \cite{KKR}. 
 \begin{thm}\label{Thm:KKR}
   Let ${\mathfrak g}(x)$ be a monic polynomial with integer coefficients. Let $k\ge 2$ be an integer such that ${\mathfrak f}(x):={\mathfrak g}(x^k)$ is irreducible over $\Q$. 
   Then ${\mathfrak f}(x)$ is monogenic if and only if all of the following conditions are satisfied: 
   \begin{enumerate}
   \item \label{KKR:I1} ${\mathfrak g}(0)$ is squarefree,
    \item \label{KKR:I2} $q$ does not divide the index of ${\mathfrak f}(x)$ for all primes $q\mid k$,
     \item \label{KKR:I3} ${\mathfrak g}(x)$ is monogenic. 
   \end{enumerate}
 \end{thm}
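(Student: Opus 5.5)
The plan is to prove both directions prime by prime using Dedekind's criterion. Let $n=\deg {\mathfrak g}$, let $\theta$ be a root of ${\mathfrak f}$, and put $\eta=\theta^k$, so that ${\mathfrak g}(\eta)=0$ and $L=\Q(\eta)\subseteq K=\Q(\theta)$. By \eqref{Dis}-type formulas (or the general discriminant formula for compositions) we have
\[\Delta({\mathfrak f})=\pm k^{nk}\,{\mathfrak g}(0)^{k-1}\Delta({\mathfrak g})^k.\]
So the only primes that can divide index$({\mathfrak f})$ are primes $q\mid k$, primes $q\mid{\mathfrak g}(0)$, and primes $q\mid\Delta({\mathfrak g})$.

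\emph{Necessity.} Suppose ${\mathfrak f}$ is monogenic; then condition \eqref{KKR:I2} holds trivially.

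For \eqref{KKR:I3}: since $[K:L]=k$ (by irreducibility of ${\mathfrak f}$), the powers $1,\theta,\ldots,\theta^{k-1}$ form an $L$-basis of $K$. Hence
\[\Z[\theta]=\bigoplus_{i<k}\theta^i\Z[\eta],\]
and therefore $\Z[\theta]\cap L=\Z[\eta]$. It follows that
\[\Z_L=\Z_K\cap L=\Z[\theta]\cap L=\Z[\eta],\]
so ${\mathfrak g}$ is monogenic.

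For \eqref{KKR:I1}: let $p^2\mid {\mathfrak g}(0)$. Write $\overline{{\mathfrak g}}=x^e u(x)$ with $e\ge1$ and $x\nmid u$. Then $\overline{{\mathfrak f}}=x^{ke}u(x^k)$, so $x$ is a repeated factor of $\overline{{\mathfrak f}}$ because $ke\ge2$. In Dedekind's criterion, taking the lift of $\overline{{\mathfrak f}}$ built from monic lifts of its irreducible factors, the remainder polynomial $M=({\mathfrak f}-\text{lift})/p$ has constant term ${\mathfrak f}(0)/p={\mathfrak g}(0)/p$. This is $\equiv0\pmod p$, so $x\mid\overline M$ and $p$ divides the index, a contradiction.

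\emph{Sufficiency.} Assume \eqref{KKR:I1}--\eqref{KKR:I3}. Primes dividing $k$ are handled by \eqref{KKR:I2}, so fix a prime $p\nmid k$. Factor $\overline{{\mathfrak g}}=\prod\phi_i^{e_i}$ in $\F_p[x]$. Then $\overline{{\mathfrak f}}=\prod\phi_i(x^k)^{e_i}$. For $\phi_i\ne x$ the polynomial $\phi_i(x^k)$ is separable, because $p\nmid k$ and $\phi_i(0)\neq0$. Moreover, distinct $\phi_i$ give coprime $\phi_i(x^k)$. So the repeated irreducible factors of $\overline{{\mathfrak f}}$ are exactly the irreducible factors $\psi$ of $\phi_i(x^k)$ with $e_i\ge2$, together with $x$ if $p\mid{\mathfrak g}(0)$.

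Choosing the lift for ${\mathfrak f}$ to be the $x\mapsto x^k$ substitution of the lift for ${\mathfrak g}$, the Dedekind remainder for ${\mathfrak f}$ is $G(x^k)$, where $G$ is the remainder for ${\mathfrak g}$. A factor $\psi\mid\phi_i(x^k)$ divides $\overline{G}(x^k)$ iff some root $\alpha$ of $\psi$ satisfies $\overline G(\alpha^k)=0$. Since $\phi_i$ is irreducible with root $\alpha^k$, this happens iff $\phi_i\mid\overline G$. Hence, away from the factor $x$, the Dedekind condition for ${\mathfrak f}$ at $p$ is equivalent to that for ${\mathfrak g}$, which holds by \eqref{KKR:I3}. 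The factor $x$, when present, is handled exactly as in the necessity argument: $x\mid \overline{G}(x^k)$ iff $p^2\mid{\mathfrak g}(0)$, which is excluded by \eqref{KKR:I1}.

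The main obstacle I expect is this bookkeeping. One point is to ensure the lift used in Dedekind's criterion can be chosen compatibly with the substitution $x\mapsto x^k$; I would note the criterion is independent of the chosen lifts. The other is to treat carefully the case $\phi_i=x$ with $e_i=1$ but $k\ge2$, where $x$ becomes repeated in $\overline{{\mathfrak f}}$ only because of the composition. That case is precisely where condition \eqref{KKR:I1} enters.
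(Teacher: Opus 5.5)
The paper gives no proof of this statement. It quotes the theorem from Kaur, Kumar and Remete and uses it as a black box, for instance to justify examining only $q\in\{2,3\}$ in Theorem~\ref{Thm:JKS}. So there is no internal route to compare against. Judged on its own, your argument is correct and complete.

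Your lattice argument for \eqref{KKR:I3}, via $\Z[\theta]=\bigoplus_{i<k}\theta^i\Z[\eta]$ and $\Z[\theta]\cap L=\Z[\eta]$, is clean. It actually shows that $\Z_L/\Z[\eta]$ embeds in $\Z_K/\Z[\theta]$, so $\mathrm{index}({\mathfrak g})$ divides $\mathrm{index}({\mathfrak f})$ at every prime. The degree count $nk=[K:L][L:\Q]$, with $[K:L]\le k$ and $[L:\Q]\le n$, also gives the irreducibility of ${\mathfrak g}$ that the definition of monogenic requires.

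The Dedekind steps are right. For $\phi_i\ne x$ and $p\nmid k$, the polynomials $\phi_i(x^k)$ are separable and pairwise coprime. The equivalence $\psi\mid\overline G(x^k)\iff\phi_i\mid\overline G$ holds because $\phi_i$ is the minimal polynomial of $\alpha^k$ over $\F_p$. Putting both directions together, you get a sharper local statement: for $p\nmid k$, $p\mid\mathrm{index}({\mathfrak f})$ if and only if $p\mid\mathrm{index}({\mathfrak g})$ or $p^2\mid{\mathfrak g}(0)$. This is exactly what lies behind the paper's practice of checking only $q\in\{2,3\}$.

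The one point you should state precisely is lift-independence. Let $g$ lift the radical of $\overline{{\mathfrak f}}$ and $h$ lift $\overline{{\mathfrak f}}$ divided by its radical. Dedekind's test is unchanged when $g$ and $h$ are altered by multiples of $p$, because $\overline F$ only changes by an element of $(\overline g,\overline h)$. It is \emph{not} invariant under an arbitrary lift of $\overline{{\mathfrak f}}$.

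Your substituted lift $\prod_i\Phi_i(x^k)^{e_i}$ does have the form $gh$, for two reasons:
\begin{enumerate}
\item each $\phi_i(x^k)$ with $\phi_i\ne x$ is squarefree;
\item you implicitly lift the factor $x$ of $\overline{{\mathfrak g}}$ as $x$ itself, so $x^{ke}=x\cdot x^{ke-1}$.
\end{enumerate}
The second choice is forced when that factor has multiplicity $e=1$. If you lift it instead as $x+pc$ with $p\nmid c$, the substituted lift is no longer congruent modulo $p^2$ to any product of the required shape. Indeed, the constant term of the remainder then shifts by $c$ times an integer prime to $p$. Your claim that $x\mid\overline G(x^k)$ iff $p^2\mid{\mathfrak g}(0)$ already presupposes the choice $\Phi=x$, so just make it explicit. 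This is precisely the case $e_i=1$ that you flagged, and it is where \eqref{KKR:I1} enters.
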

\noindent 
 We then have the following immediate corollary of Theorem \ref{Thm:KKR}.
\begin{cor}\label{Cor:KKR}
 Suppose that ${\mathfrak F}(x)=x^{2m}+Ax^m+B\in \Z[x]$ is monogenic, where $AB\ne 0$ and $m\ge 2$.  Then $B$ and $(A^2-4B)/\gcd(2,A)^2$ are squarefree. Moreover, ${\mathfrak F}_d(x):=x^{2d}+Ax^d+B$ is monogenic for every divisor $d \ge 1$ of $m$. 
\end{cor}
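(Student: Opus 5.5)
The plan is to apply Theorem~\ref{Thm:KKR} to ${\mathfrak F}(x)=x^{2m}+Ax^m+B$, written in two different ways as a power-composition. This gives both assertions of the corollary.

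For the first assertion, take ${\mathfrak g}(x)=x^2+Ax+B$ and $k=m\ge 2$, so that ${\mathfrak F}(x)={\mathfrak g}(x^m)$. Since ${\mathfrak F}(x)$ is monogenic, it is irreducible. Hence Theorem~\ref{Thm:KKR} applies, and conditions \eqref{KKR:I1} and \eqref{KKR:I3} must hold. Condition \eqref{KKR:I1} says $B={\mathfrak g}(0)$ is squarefree. Condition \eqref{KKR:I3} says ${\mathfrak g}(x)$ is monogenic. Now ${\mathfrak g}(x)$ is exactly the trinomial $g_2(x)$ with $(a,b)$ replaced by $(A,B)$, and $AB\ne 0$. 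So Theorem~\ref{Thm:Main g_2} applies and yields that $(A^2-4B)/\gcd(2,A)^2$ is squarefree.

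For the second assertion, let $d\ge 1$ divide $m$. If $d=m$ there is nothing to prove. If $d<m$, set $k=m/d\ge 2$ and ${\mathfrak g}(x)={\mathfrak F}_d(x)=x^{2d}+Ax^d+B$, so that ${\mathfrak F}(x)={\mathfrak F}_d(x^k)$. Irreducibility of ${\mathfrak F}$ again lets us invoke Theorem~\ref{Thm:KKR}. Its condition \eqref{KKR:I3} gives that ${\mathfrak F}_d(x)$ is monogenic; in particular ${\mathfrak F}_d$ is irreducible, which is part of the definition of monogenic. For $d=1$ this recovers the first step.

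The only point needing care is that Theorem~\ref{Thm:Main g_2} is stated for $g_2$ with the paper's fixed $a,b$. Since its proof only uses $ab\ne0$ and the irreducibility of $g_2$, it applies verbatim to $(A,B)$. Irreducibility of ${\mathfrak g}$ holds automatically, because ${\mathfrak F}={\mathfrak g}(x^k)$ is irreducible. So I expect no real obstacle.
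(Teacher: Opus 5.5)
Your proposal is correct and is the same argument the paper has in mind when it calls this an ``immediate corollary'' of Theorem~\ref{Thm:KKR}. With ${\mathfrak g}(x)=x^2+Ax+B$ and $k=m$, conditions \eqref{KKR:I1} and \eqref{KKR:I3} give that $B$ is squarefree and that $x^2+Ax+B$ is monogenic, so $(A^2-4B)/\gcd(2,A)^2$ is squarefree by the ``only if'' direction of Theorem~\ref{Thm:Main g_2}; with ${\mathfrak g}={\mathfrak F}_d$ and $k=m/d\ge 2$, condition \eqref{KKR:I3} gives that ${\mathfrak F}_d$ is monogenic.
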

\noindent The special cases $m\in \{2,3,6\}$ of Corollary \ref{Cor:KKR} will be useful to us in this article.  

We turn now to $g_4(x)$. The following theorem  follows from \cite{KW}, and gives the characterization of the possible Galois groups $G_4$, using the notation 4T$n$ \cite{BM}, in terms of conditions on the coefficients $a$ and $b$. 
\begin{thm}\label{Thm:KW}
        If $g_4(x)$ is irreducible over $\Q$, then $G_4\simeq$
        \begin{enumerate}
          \item {\rm 4T1} if and only if $b\delta$ is a square,
          \item {\rm 4T2} if and only if $b$ is a square,
          \item {\rm 4T3} if and only if neither $b$ nor $b\delta$ is a square.
        \end{enumerate}  
 \end{thm}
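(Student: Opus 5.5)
The plan is to prove Theorem \ref{Thm:KW} directly, by realizing $G_4$ as a subgroup of the dihedral group $D_4$ of order $8$ and then separating the cases with discriminant and quadratic-subfield arguments. Let $\theta$ be a root of $g_4(x)$. Then the roots of $g_4(x)$ are $\pm\theta,\pm\varphi$, where $\theta^2$ and $\varphi^2$ are the two roots of $g_2(x)$. The splitting field is $L=\Q(\theta,\varphi)$. Any automorphism of $L$ permutes the two pairs $\{\pm\theta\}$ and $\{\pm\varphi\}$, so $G_4$ embeds in the stabilizer of this block system in $S_4$, which is $D_4$. Since $g_4(x)$ is irreducible, $G_4$ is transitive, so $|G_4|\in\{4,8\}$. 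The transitive subgroups of order $4$ inside $D_4$ are $C_4$ (4T1) and the transitive Klein group $V_4$ (4T2). Irreducibility of $g_4$ also forces $g_2$ to be irreducible, so $\delta$ is not a square.

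The key field-theoretic facts are as follows.
\begin{itemize}
\item Since $\theta^2\varphi^2=b$, we have $\theta\varphi=\pm\sqrt{b}$, and so $L=K(\varphi)=K(\sqrt{b})$ with $K=\Q(\theta)$. Hence $|G_4|=4$ if and only if $\sqrt{b}\in K$.
\item Since $\theta^2-\varphi^2=\pm\sqrt{\delta}$, the field $L$ contains $\Q(\sqrt{\delta})$, $\Q(\sqrt{b})$ and $\Q(\sqrt{b\delta})$, and $K\supseteq \Q(\theta^2)=\Q(\sqrt{\delta})$.
\item By \eqref{Dis}, $\Delta(g_4)=2^4\delta^2 b$, so $\sqrt{\Delta(g_4)}\in\Q$ if and only if $b$ is a square. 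Thus $G_4\subseteq A_4$ if and only if $b$ is a square.
\item Among the three candidate groups, only $V_4$ lies in $A_4$: the group $C_4$ contains a $4$-cycle, which is odd, and $D_4$ contains $C_4$.
\end{itemize}

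The three cases then go as follows.

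\textbf{Case 4T2.} Since $G_4\subseteq A_4$ exactly when $b$ is a square, and $V_4$ is the only candidate in $A_4$, we get $G_4\simeq V_4$ if and only if $b$ is a square.

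\textbf{Case 4T1.} Suppose $b\delta$ is a square. Then $b$ is not a square, because $\delta$ is not. Also $\sqrt{b}=\sqrt{b\delta}/\sqrt{\delta}\in\Q(\sqrt{\delta})\subseteq K$, so $|G_4|=4$. As $b$ is not a square, $G_4\not\subseteq A_4$, so $G_4\simeq C_4$.

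Conversely, suppose $G_4\simeq C_4$. Then $L=K$ is cyclic of degree $4$, so it has a unique quadratic subfield, which must be $\Q(\sqrt{\delta})$. Since $\sqrt{b}\in L$ and $b$ is not a square (as $G_4\not\subseteq A_4$), the field $\Q(\sqrt{b})$ is quadratic, so $\Q(\sqrt{b})=\Q(\sqrt{\delta})$. Hence $b\delta$ is a square.

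\textbf{Case 4T3.} The remaining case, where neither $b$ nor $b\delta$ is a square, is exactly when $G_4$ is neither $V_4$ nor $C_4$. Therefore $G_4\simeq D_4$ (4T3) in this case.

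The step I expect to need the most care is the converse in the $C_4$ case. It uses the uniqueness of the quadratic subfield of a cyclic quartic field, together with the identification $L=K$ when $|G_4|=4$. Everything else is a routine consequence of the relations $\theta\varphi=\pm\sqrt b$ and $\theta^2-\varphi^2=\pm\sqrt\delta$, combined with the discriminant formula \eqref{Dis}.
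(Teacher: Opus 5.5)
Your proof is correct, but it does not follow the paper's route. The paper gives no argument of its own here and simply defers to the general Kappe--Warren test \cite{KW}, which reads off the Galois group of any irreducible quartic from its cubic resolvent and discriminant. Specialized to $g_4(x)$, the resolvent is $(x-a)(x^2-4b)$. It always has the rational root $a$, and it splits completely (4T2) exactly when $b$ is a square. Otherwise the 4T1/4T3 dichotomy is decided by whether $x^2-ax+b$, whose discriminant is $\delta$, splits over $\Q(\sqrt{\Delta(g_4)})=\Q(\sqrt{b})$, i.e.\ whether $b\delta$ is a square.

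You instead argue directly from the root structure $\pm\theta,\pm\varphi$:
\begin{itemize}
\item the block system puts $G_4$ inside $D_4$;
\item the identity $\theta\varphi=\pm\sqrt b$ gives $L=K(\sqrt b)$, hence $|G_4|=4$ if and only if $\sqrt b\in K$;
\item the discriminant $2^4\delta^2 b$ singles out $V_4$ as the only candidate contained in $A_4$;
\item uniqueness of the quadratic subfield of a cyclic quartic field gives the converse in the $C_4$ case.
\end{itemize}

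The citation buys brevity and a uniform algorithm valid for all quartics. Your argument is self-contained and needs no resolvent computation. It also shows transparently why $b$ and $b\delta$ are the governing quantities: they come from $\theta\varphi=\pm\sqrt b$ and $\theta^2-\varphi^2=\pm\sqrt\delta$. The one step you leave implicit is that $\Q(\sqrt b)=\Q(\sqrt\delta)$ with $b,\delta$ non-squares forces $b\delta$ to be a square; this is standard and harmless.
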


 The complete characterization of the 4T1-monogenic trinomials $g_4(x)$ and a partial characterization of the 4T3-monogenic trinomials $g_4(x)$ were given in \cite{JonesQuarticsBAMS}, while a partial characterization of the 4T2-monogenic trinomials $g_4(x)$ was given in \cite{HJBAMS}. By completing the characterizations for the cases 4T2 and 4T3, we present in the following theorem the comprehensive characterization of the monogenic trinomials $g_4(x)$, according to their Galois groups.
\begin{thm}\label{Thm:Main g_4} Suppose that $g_4(x)$ is irreducible over $\Q$. Then $g_4(x)$ is monogenic with $G_4\simeq$
\begin{enumerate}
   \item \label{g4:1} {\rm 4T1} if and only if $g_4(x)\in \{x^4\pm 4x^2+2,\ x^4-5x^2+5\}$; 
   \item \label{g4:2} {\rm 4T2} if and only if $\W$ is squarefree and 
   \[g_4(x)\in \FF_0=\{x^4+ax^2+1: a\mmod{4}\in \{0,3\}\};\] 
   \item \label{g4:3} {\rm 4T3} if and only if $\W$ and $b\ne 1$ are squarefree, $b\delta$ is not a square and 
   \[g_4(x)\in \FF_1=\{x^4+ax^2+b: (a\mmod{4}, \ b\mmod{4})\in \RR\},\] where
   $\RR=\{(0,1),(0,2),(2,2),(2,3),(1,3),(3,2),(3,1),(3,3)\}$.
   \end{enumerate}
\end{thm}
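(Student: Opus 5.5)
The plan is to combine Theorem \ref{Thm:KKR}, through Corollary \ref{Cor:KKR} with $m=2$, with Theorem \ref{Thm:Main g_2} and Theorem \ref{Thm:JKS} at the prime $q=2$. Take $\mathfrak g=g_2$ and $k=2$. Then $g_4(x)$ is monogenic if and only if three things hold: $b$ is squarefree, $g_2(x)$ is monogenic, and $2\nmid \mathrm{index}(g_4)$. By Theorem \ref{Thm:Main g_2}, $g_2$ is monogenic exactly when $\W$ is squarefree together with the mod $4$ condition when $2\mid a$. So for each residue pair $(a \mmod{4}, b\mmod 4)$, it remains to decide when $2\nmid\mathrm{index}(g_4)$. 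This uses Theorem \ref{Thm:JKS} with $m=2$, $A=a$, $B=b$.

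\textbf{The $2$-adic case analysis.} I would split into four cases.
\begin{itemize}
\item If $2\mid a$ and $2\mid b$: condition (1) gives $b\equiv 2\pmod 4$, which is automatic once $b$ is squarefree.
\item If $2\mid a$ and $2\nmid b$: condition (2) applies with $2^j\mid\mid 4$, so $j=2$ and $b_1=(b+b^4)/2$. Computing $b_1$ modulo $2$ for $b\equiv 1,3 \pmod 4$ and splitting on the parity of $a_2=a/2$ should reproduce $(0,1)$ and $(2,3)$, in agreement with the $g_2$ list.
\item If $2\nmid a$ and $2\mid b$: condition (3) applies with $l=1$ and $a_1=(a+a^2)/2$. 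This should leave exactly $b\equiv 2\pmod 4$ with $a$ odd, that is, $(1,2)$ and $(3,2)$. I expect one of these to need the second alternative; I will check both.
\item If $2\nmid ab$: condition (4) applies with $s=1$, $k=1$. Here $H_1=x^2+ax+b\equiv x^2+x+1 \pmod 2$, and
\[H_2=\bigl(ax^2+b+(ax+b)^2\bigr)/2.\]
I need to decide when $H_2$ is coprime to $x^2+x+1$ in $\F_2[x]$. Since $x^2+x+1$ is irreducible, this amounts to checking whether $H_2$ vanishes at a primitive cube root of unity in $\F_4$, which depends only on $a,b \pmod 4$. This computation should select $(1,3),(3,1),(3,3)$ and exclude $(1,1)$.
\end{itemize}
Collecting the surviving pairs gives exactly $\RR$, so in all cases $g_4$ is monogenic if and only if $b$ and $\W$ are squarefree and $(a\mmod 4, b\mmod 4)\in\RR$.

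\textbf{Splitting by Galois group.} I would then intersect this criterion with Theorem \ref{Thm:KW}.
\begin{itemize}
\item For \textbf{4T2}, $b$ must be a square, and squarefree forces $b=1$. Then $\RR$ restricted to $b\equiv1$ gives $a\mmod 4\in\{0,3\}$, which is $\FF_0$.
\item For \textbf{4T3}, we need $b$ not a square (so $b\neq 1$ under squarefreeness) and $b\delta$ not a square. This is precisely item (3).
\item For \textbf{4T1}, $b\delta=b(a^2-4b)$ is a square with $b$ squarefree and $\W$ squarefree. Write $a^2-4b=b c^2$, so $b\mid a^2$, hence $b\mid a$ since $b$ is squarefree. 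Put $a=bt$. Then $bt^2-4=c^2$, while squarefreeness of $\W=bc^2/\gcd(2,a)^2$ forces $c\mid \gcd(2,a)$.
\end{itemize}
Enumerating this finite set of possibilities should leave only $b=2$, $a=\pm4$ and $b=5$, $a=-5$; the case $a=5$ is reducible or fails $\RR$. These agree with the known 4T1 result of \cite{JonesQuarticsBAMS}, which can also simply be cited.

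The main obstacle is the $2\nmid ab$ case of condition (4) together with the parity bookkeeping in condition (2). There I will carefully expand $(ax+b)^2$, divide by $2$, and reduce modulo $x^2+x+1$ for each residue pair.
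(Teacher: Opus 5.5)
\textbf{An error in your third case.} Your strategy is sound, but the prediction in your third case is wrong. As stated, it contradicts your later claim that the surviving pairs are exactly $\RR$.

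When $2\nmid a$ and $2\mid b$, squarefreeness gives $b\equiv 2\pmod 4$, so $b_2=b/2$ is odd. Also $a_1=a(a+1)/2$.
\begin{enumerate}
\item If $a\equiv 3\pmod 4$, then $4\mid a+1$, so $a_1$ is even. The first alternative of condition (3) of Theorem \ref{Thm:JKS} holds, and $(3,2)$ survives.
\item If $a\equiv 1\pmod 4$, then $a_1$ is odd, so the first alternative fails. In the second alternative, $aa_1$ and $b_2$ are both odd, so $2\mid aa_1-b_2$ and it fails too. Hence $(1,2)$ is excluded.
\end{enumerate}
For example, $x^4+x^2+2$ is irreducible, with $b=2$ and $\W=-7$ squarefree and $g_2(x)=x^2+x+2$ monogenic. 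Yet condition (3) fails, so $2\mid{\rm index}(g_4)$ and it is not monogenic. Keeping $(1,2)$ would give nine pairs rather than the eight in $\RR$. Your other three cases are computed correctly, so once $(1,2)$ is removed the collected set is exactly $\RR$.

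In your 4T1 enumeration, $x^4+5x^2+5$ is irreducible ($5$-Eisenstein), not reducible. It is excluded only because $(1,1)\notin\RR$.

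\textbf{Comparison with the paper's route.} With that correction your argument is complete, and it is organized differently from the paper's. The paper starts from the Galois group:
\begin{enumerate}
\item For 4T2 and 4T3 it uses Corollary \ref{Cor:KKR} and Theorem \ref{Thm:Main g_2} to dispose of even $a$, and computes at $q=2$ only for odd $a$.
\item For odd $a$ and even $b$ it runs condition (4) of Theorem \ref{Thm:JKS}, whose hypothesis $q\nmid AB$ actually fails there. Your use of condition (3) is the correct one and gives the same conclusion $a\equiv 3\pmod 4$.
\item It cites \cite{JonesQuarticsBAMS} for item (1).
\item It handles the converses by a brief appeal to Theorem \ref{Thm:KKR}.
\end{enumerate}

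You instead first prove one Galois-independent criterion: $g_4$ is monogenic if and only if $b$ and $\W$ are squarefree and $(a\mmod{4},\ b\mmod{4})\in\RR$. Only then do you intersect with Theorem \ref{Thm:KW}. This costs a few extra residue computations for even $a$ at the level of $g_4$, but it buys two things:
\begin{enumerate}
\item Both directions of every item come at once. In particular, you supply the check $2\nmid{\rm index}(g_4)$ for $a\equiv 0\pmod 4$, which the paper's 4T2 converse leaves implicit.
\item Item (1) becomes a short enumeration rather than a citation: $bt^2=c^2+4$ with $c^2=\gcd(2,a)^2$, i.e.\ $bt^2\in\{5,8\}$.
\end{enumerate}
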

\begin{proof}
 The proof of item \eqref{g4:1} can be found in \cite{JonesQuarticsBAMS}. 
 
 For item \eqref{g4:2}, suppose that $G_4\simeq$ 4T2. If $g_4(x)$ is monogenic, it follows from Corollary \ref{Cor:KKR} that  $g_2(x)$ is monogenic, and both $b$ and $\W$ are squarefree. Since $g_2(x)$ is monogenic, we see from Theorem \ref{Thm:Main g_2} that   
  \begin{equation}\label{g_2 conditions}
 (a \mmod{4}, \ b\mmod{4})\in \{(0,1),(0,2),(2,2),(2,3)\} \quad \mbox{if $2\mid a$.}
 \end{equation}  From Theorem \ref{Thm:KW} we have that $b$ is a square, while from Theorem \ref{Thm:KKR}, we have that $g_2(0)=b$ is squarefree. Hence, $b=1$. Thus, if $2\mid a$, we conclude from \eqref{g_2 conditions} that $a\equiv 0 \pmod{4}$. So, suppose that $2\nmid a$. From Theorem \ref{Thm:KKR}, we only have to check divisibility of the index($g_4$) by $q=2$. Since $g_4(x)$ is monogenic, we have by condition \eqref{JKS:C4} of Theorem \ref{Thm:JKS} with ${\mathfrak F}(x)=g_4(x)$ and $q=2$, that
 \[H_1(x)=x^{2}+ax+1 \ \mbox{ and }\ H_2(x)=\dfrac{ax^{2}+1+\left(-ax-1\right)^{2}}{2}=\left(\frac{a^2+a}{2}\right)x^2+ax+1\]  are coprime in $\F_2[x]$. Since $2\nmid a$, $H_1(x)$ is irreducible in $\F_2[x]$, which implies that $a\equiv 3\pmod{4}$. 
 
 Conversely, suppose that $g_4(x)\in \FF_0$, where $\W$ is squarefree. Then $g_2(x)$ is monogenic from Theorem \ref{Thm:Main g_2}, and we deduce that $g_4(x)$ is monogenic from Theorem \ref{Thm:KKR}.

 For item \eqref{g4:3}, suppose that $G_4\simeq$ 4T3. If $g_4(x)$ is monogenic, then we have that $b$ and $\W$ are both squarefree, and $g_2(x)$ is monogenic, from Corollary \ref{Cor:KKR}. From Theorem \ref{Thm:KW}, we have that $b\delta$ and $b$ are not squares, so that $b\ne 1$.  Moreover, we have from Theorem \ref{Thm:Main g_2} that conditions \eqref{g_2 conditions} hold. Suppose then that $2\nmid a$. From condition \eqref{JKS:C4} of Theorem \ref{Thm:JKS} with $q=2$, we get that
 \begin{align}\label{H1 and H2 for g4} 
 \begin{split}
 H_1(x)&=x^{2}+ax+b \ \mbox{ and }\\
 H_2(x)&=\dfrac{ax^{2}+b+\left(-ax-b\right)^{2}}{2}=\left(\frac{a^2+a}{2}\right)x^2+abx+\frac{b^2+b}{2}
 \end{split}
 \end{align}
 are coprime in $\F_2[x]$. If $2\mid b$, then $b\equiv 2\pmod{4}$ since $b$ is squarefree. Then
 \[H_1(x)\equiv x(x+a) \pmod{2} \quad \mbox{and} \quad H_2(x)\equiv\left(\frac{a^2+a}{2}\right)x^2+1 \pmod{2}\] from \eqref{H1 and H2 for g4}, which implies that $a\equiv 3 \pmod{4}$. If $2\nmid b$, then we have from \eqref{H1 and H2 for g4} that $H_1(x)\equiv x^2+x+1\pmod{2}$ is irreducible in $\F_2[x]$. Thus, the only way that $H_1(x)$ and $H_2(x)$ would not be coprime in $\F_2[x]$ is if $H_2(x)\equiv H_1(x)\pmod{2}$, which implies that $(a \mmod{4},\ b\mmod{4})=(1,1)$. Hence, we must have 
 \[(a \mmod{4},\ b\mmod{4})\in \{(1,3),(3,1),(3,3)\}.\] 

The converse is similar to the previous case 4T2, and we omit the details. 
 \end{proof}

We turn next to $g_6(x)$. The characterization of the monogenic trinomials $g_6(x)$ according to their Galois groups is given below \cite{HJActa}.
\begin{thm}\label{Thm:Main g_6}  
If $g_6(x)$ is irreducible over $\Q$, then
$g_6(x)$ is monogenic with $G_6\simeq$
\begin{enumerate}
  \item \label{g6:I1} {\rm 6T1} if and only if $g_6(x)\in \{x^6-x^3+1,\ x^6+x^3+1\}$;
  \item \label{g6:I2} {\rm 6T2} never occurs;
 \item \label{g6:I3} {\rm 6T3} if and only if $g_6(x)\in \FF_i$ for some  $i\in \{2,3,4\}$, where
 \begin{enumerate}
   \item $\FF_2:=\{x^6-2x^3+2,\ x^6+2x^3+2\}$,
   \item $\FF_3:=\{x^6+ax^3+1: a \mmod{9}\ne 0,\ a\ne \pm 1,$\\
    $\hspace*{1in} \mbox{with} \ a-2 \ \mbox{and}\ a+2 \ \mbox{squarefree}\}$,
   \item $\FF_4:=\left\{x^6+ax^3-1: a \mmod{4}\ne 0, \ a \mmod{9}\not \in \{0,4,5\},\right.$\\
   $\hspace*{1in}\left. \mbox{with} \ (a^2+4)/\gcd(a^2+4,4) \ \mbox{squarefree} \right\}$;
    \end{enumerate}
  \item \label{g6:I4} {\rm 6T5} if and only if $g_6(x)\in \FF_5$, where
    \begin{align*}
  \FF_5:=&\{x^6+ax^3+(a^2+3)/4: a \mmod{2}=1,\ a\ne \pm 1, \\
  &\hspace*{1.5in} \mbox{with} \ (a^2+3)/4 \ \mbox{squarefree}\};
  \end{align*}
   \item \label{g6:I5} {\rm 6T9} if and only if $g_6(x)$ is contained in one of $144$ infinite pairwise-disjoint $2$-parameter monogenic families. 
   \end{enumerate} 
\end{thm}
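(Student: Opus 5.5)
Since $g_6(x)=g_2(x^3)$, the plan is to split the argument into a monogenicity part and a Galois-group part. Combining Theorem \ref{Thm:KKR} (with $k=3$) and Corollary \ref{Cor:KKR}, $g_6(x)$ is monogenic if and only if three things hold: $b$ is squarefree, $g_2(x)$ is monogenic, and $3\nmid{\rm index}(g_6)$. By Theorem \ref{Thm:Main g_2}, the second condition means that $\W$ is squarefree, together with the stated $2$-adic conditions on $(a \mmod 4,\, b\mmod 4)$ when $2\mid a$.

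The only new local computation is at $q=3$, and I would do it with Theorem \ref{Thm:JKS} applied to $g_6$ with $m=3$. There are three cases.
\begin{enumerate}
\item If $3\mid a$ and $3\mid b$, condition \eqref{JKS:C1} requires $9\nmid b$, which is automatic since $b$ is squarefree.
\item If $3\mid a$ and $3\nmid b$, condition \eqref{JKS:C2} applies with $j=1$. It becomes an explicit condition on $a/3$ and $(b-b^3)/3 \bmod 3$, which depends only on $(a\mmod 9,\ b\mmod 9)$.
\item If $3\nmid a$, then either $3\mid b$ and condition \eqref{JKS:C3} applies with $l=1$, or $3\nmid b$ and condition \eqref{JKS:C4} applies with $s=1$, $k=1$. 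In the latter case one checks when $H_1(x)=x^2+ax+b$ and $H_2(x)=(ax^3+b+(-ax-b)^3)/3$ share a root in $\F_3$ or $\F_9$.
\end{enumerate}
Each case yields a finite list of admissible residues modulo $9$. Conversely, if all these conditions hold, then Theorem \ref{Thm:KKR} gives monogenicity.

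Next, I would use the characterization of $G_6$ from \cite{Cav,HJMS}, as I recall it. The roots of $g_6$ are the cube roots of $(-a\pm\sqrt{\delta})/2$. The Galois group is then governed by three facts: whether $-3\delta$ (equivalently $\Q(\sqrt{\delta})=\Q(\sqrt{-3})$) is a square, whether $b$ is a cube, and whether the cubic resolvent $r(x)=x^3-3bx+ab$ is reducible. Each group in turn is combined with the constraints that $b$ and $\W$ are squarefree, and this forces strong Diophantine restrictions.
\begin{itemize}
\item \textbf{$b$ a cube.} A squarefree cube must satisfy $b=\pm1$. This is the source of the families $\FF_3$ and $\FF_4$, and, via the condition that $\delta\in\{a^2\mp4\}$ be squarefree after removing $\gcd(2,a)^2$, of the corresponding side conditions.
\item \textbf{$-3\delta$ a square.} Then $a^2-4b=-3c^2$, and squarefreeness of $\W$ forces $c=1$. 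Hence $b=(a^2+3)/4$ with $a$ odd, giving $\FF_5$. Adding the requirement that $b$ be a cube, i.e. $b=1$, gives $a=\pm1$, hence 6T1 and the two polynomials in item \eqref{g6:I1}.
\item \textbf{6T2 (the $S_3$ case).} Here one shows that the required conditions are incompatible with $b$ and $\W$ being squarefree.
\item \textbf{Reducible $r(x)$ with $b$ not a cube.} This gives the sporadic set $\FF_2$. A rational root $t$ of $r$ satisfies $t(t^2-3b)=-ab$, and squarefreeness bounds the possibilities, leaving $a=\pm2$, $b=2$.
\end{itemize}
The $2$-adic and $3$-adic residue conditions from the first step are then intersected with each of these families.

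The main obstacle I expect is the generic case 6T9, together with verifying that the sporadic reducible-resolvent case really yields only $\FF_2$. For 6T9 the answer has to be organized as the $144$ families. These come from the admissible pairs $(a\mmod 4,\, b\mmod 4)$ and $(a\mmod 9,\, b\mmod 9)$ produced by the local analysis, cut down by the non-square and non-cube exclusions. Bookkeeping the $3$-adic residue classes correctly in condition \eqref{JKS:C4} is where errors are most likely, so I would check that table by direct computation over all residues modulo $9$.
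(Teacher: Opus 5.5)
The paper gives no proof of this theorem; it quotes it from \cite{HJActa}. Your reduction is the natural one and is the same mechanism the paper uses for Theorem \ref{Thm:Main g_4}: Theorem \ref{Thm:KKR} with $k=3$ turns monogenicity into ``$b$ squarefree, $g_2$ monogenic, $3\nmid{\rm index}(g_6)$'', combined with the trichotomy ``$-3\delta$ a square / $b$ a cube / $r$ reducible''. Items (1)--(4) can be completed along these lines, after the repairs in the second paragraph.

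The genuine gap is item (5). It asserts a specific decomposition into exactly $144$ infinite, pairwise-disjoint two-parameter families. Your proposal never says what these families or their parameters are, and never shows that any of them is infinite. The mechanism you suggest cannot produce the count. ``$-3\delta$ not a square'', ``$b$ not a cube'' and ``$r$ irreducible'' are not congruence conditions, so they eliminate no residue class at all. Moreover, your local analysis yields $10$ admissible pairs $(a\mmod{4},\ b\mmod{4})$ (Theorem \ref{Thm:Main g_2} with $b$ squarefree) and $54$ admissible pairs $(a\mmod{9},\ b\mmod{9})$ (exactly the set $\SSS$ in $\CC_{81}$). Together these give $540$ classes modulo $36$, not $144$. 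So the number $144$ reflects the particular parametrization in \cite{HJActa}, which you would have to reconstruct. Infinitude of each family also needs its own argument: infinitely many parameter values must keep $b$ and $\W$ squarefree while avoiding the three Galois exclusions. As written, your argument gives only a description of the 6T9 case by local conditions plus exclusions, not item (5) as stated.

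There are two smaller points in (1)--(4).

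First, ``squarefreeness of $\W$ forces $c=1$'' is false for even $a$. Here $\delta=-12$ gives $\W=-3$; for example $(a,b)=(4,7)$ has $b$ squarefree. What excludes this case is the $2$-adic condition of Theorem \ref{Thm:Main g_2}, since $(a\mmod{4},\ b\mmod{4})\in\{(0,3),(2,0)\}$ there. Equivalently, $g_2$ monogenic with $\Q(\sqrt{\delta})=\Q(\sqrt{-3})$ forces $\delta=-3$.

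Second, the 6T2 and $\FF_2$ steps are only asserted. The following computation settles both. If $r(t)=0$ and $b$ is squarefree, then $b\mid t$. Writing $t=bu$ gives $a=bu(3-bu^2)$ and
\[\delta=b\,(bu^2-1)^2\,(bu^2-4),\]
so $\W$ squarefree forces $|bu^2-1|\le 2$. Discard the cases $a=0$, $\delta=0$, and the reducible $x^6\mp 4x^3-1$. What remains is $(a,b)=(\pm2,2)$, for which $-3\delta=12$ is not a square. Hence 6T2 never occurs, and the reducible-resolvent part of 6T3 is exactly $\FF_2$.
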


 Let $P$, $Q$, $R$ and $S$ denote the following statements: 
   \begin{align}\label{PQRS}
   \begin{split}
     P:& \quad -3b\delta \ \mbox{is a square},\\ 
     Q:& \quad -3b \ \mbox{is a square},\\
     R:& \quad  r(x) \ \mbox{is reducible},\\
     S:& \quad b \ \mbox{is a cube}.
     \end{split}
   \end{align}  
   Making use of \eqref{PQRS}, the next theorem uses the possible pairs $(G_4,G_6)$ and conditions on the coefficients $a$ and $b$ of $f(x)$ to determine $\Gal(f)$.
\begin{thm}{\rm \cite{Chen}}\label{Thm:Chen}
Recall the definitions as given in \eqref{Basics} and \eqref{PQRS}.   
The pairs $(G_4,G_6)\in \{(4{\rm T}1,6{\rm T}1),(4{\rm T}1,6{\rm T}2),(4{\rm T}1,6{\rm T}5)\}$ are not possible. Table \ref{T:Chen} indicates how the pair $(G_4,G_6)$ determines the possibilities for $\Gal(f)$.  
\begin{table}[h]
 \begin{center}
\begin{tabular}{c|ccc}
$(G_4,G_6)$ & \multicolumn{3}{c}{$\Gal(f)$} \\ \hline
$(4{\rm T}1,6{\rm T}3)$ & $12{\rm T}11$\\
$(4{\rm T}1,6{\rm T}9)$ & $12{\rm T}39$\\
$(4{\rm T}2,6{\rm T}1)$ & $12{\rm T}2$\\
$(4{\rm T}2,6{\rm T}2)$ & $12{\rm T}3$\\
$(4{\rm T}2,6{\rm T}5)$ & $12{\rm T}18$\\
$(4{\rm T}2,6{\rm T}3)$ & $12{\rm T}3$ & $12{\rm T}10$\\
& \mbox{if $3\alpha$ or $3\beta$} & \mbox{otherwise}\\
& \mbox{is a square} \\ 
$(4{\rm T}2,6{\rm T}9)$ & $12{\rm T}16$ & $12{\rm T}37$\\
& \mbox{if $3\alpha$ or $3\beta$} & \mbox{otherwise}\\
& \mbox{is a square} \\ 
$(4{\rm T}3,6{\rm T}1)$ & $12{\rm T}14$\\
$(4{\rm T}3,6{\rm T}2)$ & $12{\rm T}15$\\
$(4{\rm T}3,6{\rm T}5)$ & $12{\rm T}42$\\
$(4{\rm T}3,6{\rm T}3)$ & $12{\rm T}12$ & $12{\rm T}13$ & $12{\rm T}28$\\
& \mbox{if $P$ and $R$ are true} & \mbox{if $Q$ and $R$ are true}& \mbox{otherwise}\\
& \mbox{or $Q$ and $S$ are true} & \mbox{or $P$ and $S$ are true}\\
$(4{\rm T}3,6{\rm T}9)$ & $12{\rm T}38$ & $12{\rm T}81$\\
& \mbox{if $-3b$ or $-3b\delta$} & \mbox{otherwise}\\
& \mbox{is a square} 
\end{tabular}
\end{center}
\caption{$\Gal(f)$ from possible pairs $(G_4,G_6)$}
 \label{T:Chen}
\end{table}  
\end{thm}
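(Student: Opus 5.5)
Since this is a result quoted from \cite{Chen}, I would reconstruct it from the structure of the splitting field rather than by computing resolvents of $f(x)$ directly. Let $L$, $L_4$, $L_6$ denote the splitting fields over $\Q$ of $f(x)$, $g_4(x)$ and $g_6(x)$.

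\textbf{Reduction to a fiber product.} If $y$ is any root of $f(x)$, then $y^3$ is a root of $g_4(x)$ and $y^2$ is a root of $g_6(x)$. Hence $y=y^3/y^2\in L_4L_6$. Conversely $L_4,L_6\subseteq L$, so $L=L_4L_6$. It follows that
\[\Gal(f)\simeq G_4\times_{\Gal(M/\Q)} G_6, \qquad M:=L_4\cap L_6,\]
and in particular $|\Gal(f)|=|G_4|\,|G_6|/[M:\Q]$. The whole proof then reduces to determining $M$ in each case and identifying the resulting transitive subgroup of $S_{12}$ with some $12{\rm T}n$ via \cite{BM}.

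\textbf{Determining $M$.} $G_4$ is a $2$-group, so $M$ is a $2$-power subextension of $L_6$ that always contains $\Q(\sqrt{\delta})$. I would list the quadratic subfields of $L_4$ using Theorem \ref{Thm:KW}. In case 4T3 they are $\Q(\sqrt{\delta})$, $\Q(\sqrt{b})$ and $\Q(\sqrt{b\delta})$, and the quartic subfields are $\Q(\sqrt{\alpha})$, $\Q(\sqrt{\beta})$ and their conjugates. I would then compare these with the $2$-power subfields of $L_6$. These are $\Q(\sqrt{\delta})$ and $\Q(\sqrt{-3})$, which is present whenever $G_6$ contains the cyclotomic part, together with the fields cut out by $\sqrt{\alpha}$ or $\sqrt{3\alpha}$ arising from the cubic Kummer structure $\theta^3=(-a\pm\sqrt{\delta})/2$.

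The case split follows from this comparison. $M$ grows beyond $\Q(\sqrt{\delta})$ exactly when $\sqrt{-3}$ lies in $L_4$, that is, when $-3$, $-3b$ or $-3b\delta$ is a square. The first option is impossible here, and the other two are $Q$ and $P$. For example, for $(4{\rm T}3,6{\rm T}9)$ one gets $[M:\Q]=2$ generically and $[M:\Q]=4$ when $P$ or $Q$ holds, giving orders $8\cdot 36/2=144$ and $72$, that is, 12T81 versus 12T38. The impossible pairs should drop out the same way. When $G_4\simeq$ 4T1, the field $L_4$ is cyclic quartic and contains $\Q(\sqrt{\delta})$ with $\delta b$ a square. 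Meanwhile 6T1, 6T2 and 6T5 all force $\sqrt{-3}\in L_6$ with the corresponding quadratic subfield equal to $\Q(\sqrt{\delta})$. This would make $-3\delta$ a square, hence $-3b$ a square, and I would check that this yields a contradiction with irreducibility or with the cyclic structure.

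\textbf{Main obstacle.} I expect the hardest step to be distinguishing groups of equal order: 12T3 versus 12T10, 12T16 versus 12T37, and especially 12T12, 12T13 and 12T28 for $(4{\rm T}3,6{\rm T}3)$. Here the order alone does not decide the group, and $M$ may be a quartic field such as $\Q(\sqrt{3\alpha})$ rather than a biquadratic field. My plan would be as follows. First show that $\sqrt{3\alpha}\in L_6$ in the 6T3 case; this uses the standard fact that the sextic $g_6$ with cyclic-type group has its quadratic twist governed by $3\alpha$. Then decide whether $\sqrt{3\alpha}$ or $\sqrt{3\beta}$ lies in $L_4$. For the 12T12/12T13 split, I would identify the fiber product by the diagonal action on the two $\Q(\sqrt{-3})$-type conditions. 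This means checking whether the quadratic subfield shared with $L_4$ is $\Q(\sqrt{b})$ or $\Q(\sqrt{b\delta})$, matched against whether the cubic subfield comes from $r(x)$ being reducible ($R$) or from $b$ being a cube ($S$); this pairing produces the stated conjunctions $P\wedge R$, $Q\wedge S$ and so on. Finally, I would confirm each identification by comparing invariants (order, center, number of index-2 subgroups) against the transitive group database \cite{BM}.
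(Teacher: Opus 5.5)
Since the paper only quotes this theorem from Chen and gives no proof, I am judging your proposal on its own. The reduction $L=L_4L_6$, $\Gal(f)\simeq G_4\times_{\Gal(M/\Q)}G_6$, is sound and is enough to prove the table. The genuine gap is in the step that determines $M$, which is wrong as written.

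\textbf{What $M$ actually is.} Let $\rho,\bar\rho$ be the roots of $g_2$. Since $\zeta_3$ is a ratio of two roots of $g_6$, the field $F:=\Q(\sqrt{\delta},\sqrt{-3})$ always lies in $L_6$. Moreover $L_6=F(\rho^{1/3},\bar\rho^{1/3})$ is a Kummer extension of $F$ of $3$-power degree. Hence \emph{every} subfield of $L_6$ of $2$-power degree lies in $F$.

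\textbf{Consequences for your plan.}
\begin{itemize}
\item $L_6$ does not in general contain fields ``cut out by $\sqrt{\alpha}$ or $\sqrt{3\alpha}$''. Your step ``show $\sqrt{3\alpha}\in L_6$ in the 6T3 case'' is false. For example, $x^6+3x^3+1$ has $G_6\simeq$ 6T3 and $b=1$, but $3\alpha=15$ and $\sqrt{15}\notin\Q(\sqrt5,\sqrt{-3})$.
\item $M$ is Galois over $\Q$, so it is never a non-normal quartic such as $\Q(\sqrt{3\alpha})$. Always $M=\Q(\sqrt\delta)$ or $M=F$, and the latter holds exactly when $\sqrt{-3}\in L_4$.
\item Your criterion for $\sqrt{-3}\in L_4$ (``$-3b$ or $-3b\delta$ is a square''; the ``$-3$'' should read $-3\delta$) uses the 4T3 quadratic subfields. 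It never fires when $b$ is a square.
\item For 4T2, write $b=c^2$. Then $(\theta_1\pm\theta_2)^2=-a\pm 2c$, so $L_4=\Q(\sqrt{-\alpha},\sqrt{-\beta})$. The same sign correction applies to your 4T3 quartic subfields. Since $-3\delta$ is not a square for 6T3 and 6T9, $\sqrt{-3}\in L_4$ if and only if $3\alpha$ or $3\beta$ is a square. That is exactly the table's condition, and it comes from $L_4$, not from $L_6$.
\end{itemize}

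\textbf{Misdiagnosed obstacle.} In each of the rows you flag, the two group orders differ by the factor $[M:\Q]$: $12$ vs $24$ for 12T3/12T10, $36$ vs $72$ for 12T16/12T37, and $24$ vs $48$ for 12T12, 12T13 vs 12T28. So all of these are separated by order alone. The only split needing more is 12T12 versus 12T13, and there your sketch omits the actual ingredients. You need the following.
\begin{enumerate}
\item[(i)] For $G_6\simeq$ 6T3, exactly one of $R,S$ holds. Kummer degree $3$ over $F$ forces either $b$ to be a cube or $\rho/\bar\rho$ to be a cube in $F$. The latter puts the roots of $r(x)$ in $F$, so $r$ is reducible. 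If both $R$ and $S$ held, with $c^3=b$, some root $w$ of $g_6$ would satisfy $w+c/w\in\Q$, contradicting irreducibility.
\item[(ii)] The cyclic index-$2$ subgroup $C_6$ of $G_6\simeq D_6$ (order $12$) fixes $\Q(\sqrt{-3})$ under $R$ and $\Q(\sqrt{-3\delta})$ under $S$. To see this, check which lifts of $\Gal(F/\Q)$ commute with $u\mapsto\zeta_3u$, where $u^3=\rho$.
\item[(iii)] The $C_4$ in $G_4\simeq D_4$ fixes $\Q(\sqrt{b\delta})$. This field is $\Q(\sqrt{-3})$ under $P$ and $\Q(\sqrt{-3\delta})$ under $Q$.
\item[(iv)] $D_4\times_{\Gal(F/\Q)}D_6$ is dihedral of order $24$, containing an element of order $12$, when these two distinguished elements of $\Gal(F/\Q)$ coincide, i.e.\ under $P\wedge R$ or $Q\wedge S$. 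Otherwise it has no element of order $12$.
\end{enumerate}
This is what produces the pairing of conditions in the table; attaching the labels 12T12 and 12T13 is then a database check.

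Your treatment of the excluded pairs is fine once completed. In 4T1, $b\delta$ being a nonzero square forces $b,\delta>0$, so $-3\delta$ cannot be a square.
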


\section{Proof of Theorem \ref{Thm:Main}}
\begin{proof}
  For each possible pair $(G_4,G_6)$ yielding 12T$n$ from Theorem \ref{Thm:Chen}, we use Theorem \ref{Thm:Main g_4}, Theorem \ref{Thm:Main g_6} and Theorem \ref{Thm:KKR} 
  to determine the 12T$n$-monogenic trinomials $f(x)$. More precisely, if $f(x)=x^{12}+ax^6+b$ with $\Gal(f)\simeq$ 12T$n$, which arises from $(G_4,G_6)$ in Theorem \ref{Thm:Chen}, then it follows from Theorem \ref{Thm:KKR} that $f(x)$ is monogenic if and only if both $g_4(x)$ and $g_6(x)$ are monogenic, $b$ is squarefree and $\gcd({\rm index}(f),6)=1$. Consequently, we need to determine the pairs $(a,b)$ that occur for both $g_4(x)$ and $g_6(x)$ in, respectively, Theorem \ref{Thm:Main g_4} and Theorem \ref{Thm:Main g_6} for the specific pair $(G_4,G_6)$. Recall, from Corollary \ref{Cor:KKR},  that $b$ and $\W$ are squarefree for every possible monogenic $g_4(x)$ and $g_6(x)$. Therefore, once we have determined the viable monogenic candidates $f(x)$ from this process, we only need to check that $\gcd({\rm index}(f),6)=1$, and we can do so using Theorem \ref{Thm:JKS}. One important item that follows from Theorem \ref{Thm:KKR} is that if both $g_4(x)$ and $g_6(x)$ are known to be monogenic, then we can determine the monogenicity of $f(x)$ by examining either $q=2$ (since $f(x)=g_6(x^2)$) or $q=3$ (since $f(x)=g_4(x^3)$) in Theorem \ref{Thm:JKS}. That is,  Theorem \ref{Thm:KKR} does not require the examination of both $q=2$ and $q=3$ in Theorem \ref{Thm:JKS} to determine the monogenicity of $f(x)$. Therefore, we can choose to examine $q=2$ or $q=3$, depending on which situation requires fewer calculations. 
     
\subsection*{{\bf The Case} $\mathbf{12T2}$} If $\Gal(f)\simeq$ 12T2, then we have from Theorem \ref{Thm:Chen} that $(G_4,G_6)=(4{\rm T}2,6{\rm T}1)$. Observe that $(a,b)=(-1,1)$ is the only pair for which $g_4(x)$ and $g_6(x)$ are both monogenic with the appropriate $(G_4,G_6)$ from Theorem \ref{Thm:Main g_4} and Theorem \ref{Thm:Main g_6}. Thus, $f(x)=x^{12}-x^6+1=\Phi_{36}(x)$ (the cyclotomic polynomial of index 36), which is well known to be monogenic \cite{Washington}, is the only monogenic trinomial $f(x)$ with $\Gal(f)\simeq$ 12T2. Knowing that $f(x)=\Phi_{36}(x)$ is irreducible, we can verify independently the fact that $f(x)=g_6(x^2)$ is monogenic by appealing to Theorem \ref{Thm:KKR} and examining condition \eqref{JKS:C4} of Theorem \ref{Thm:JKS} with $q=2$. We get
\begin{align*}
H_1(x)&=x^6-x^3+1\equiv x^6+x^3+1 \pmod{2} \quad \mbox{and}\\
H_2(x)&=\frac{-x^6+1+(x^3-1)^2}{2}\equiv (x+1)(x^2+x+1) \pmod{2},
\end{align*} 
from which it is easily seen that $H_1(x)$ and $H_2(x)$ are coprime in $\FF_2[x]$. It follows that $2\nmid $ index($f$), and we conclude that $f(x)$ is monogenic by Theorem \ref{Thm:KKR}. 

\subsection*{{\bf The Case} $\mathbf{12T3}$} If $\Gal(f)\simeq$ 12T3, then we see from Theorem \ref{Thm:Chen} that there are two possibilities in this case. However, the possibility $(G_4,G_6)=(4{\rm T}2,6{\rm T}2)$ cannot yield any monogenic trinomials $f(x)$ since there are no 6T2-monogenic trinomials $g_6(x)$ by Theorem \ref{Thm:Main g_6}. So, suppose that $(G_4,G_6)=(4{\rm T}2,6{\rm T}3)$, where 
\begin{equation}\label{12T3: 3alpha or 3beta is a square} 
3\alpha \quad \mbox{or} \quad 3\beta \quad \mbox{is a square.}
\end{equation} 
If $f(x)$ is monogenic, then, since $g_4(x)$ must be monogenic by Theorem \ref{Thm:KKR}, we see from Theorem \ref{Thm:Main g_4} that $b=1$. Hence,  
\begin{equation}\label{12T3: 3alpha and 3beta}
3\alpha=3(a+2) \quad \mbox{and} \quad 3\beta=3(a-2).
\end{equation} Furthermore, since $b=1$, and $g_6(x)$ must be monogenic by Theorem \ref{Thm:KKR}, we see from Theorem \ref{Thm:Main g_6} that $g_6(x)\in \FF_3$ with both $a+2$ and $a-2$ squarefree. It follows from \eqref{12T3: 3alpha or 3beta is a square} and \eqref{12T3: 3alpha and 3beta} that $a\in \{1,5\}$, which contradicts the fact that $a \mmod{4}\in \{0,3\}$ by Theorem \ref{Thm:Main g_4}. Thus, there are no 12T3-monogenic trinomials $f(x)$.

\subsection*{{\bf The Case} $\mathbf{12T10}$} Suppose that $\Gal(f)\simeq$ 12T10. Then we have from Theorem \ref{Thm:Chen} that $(G_4,G_6)=(4{\rm T}2,6{\rm T}3)$, such that neither $3\alpha$ nor $3\beta$ is a square. 

Suppose that $f(x)$ is monogenic. 
We see from the analysis given in Case 12T3 that $b=1$, $a\not \in \{1,5\}$ and $a\mmod 4\in \{0,3\}$. Furthermore, by Theorem \ref{Thm:Main g_6}, we have that $a\ne \pm 1$ and $a\mmod {9}\ne 0$, with $a+2$ and $a-2$ both squarefree, which is equivalent to $\W$ being squarefree. Thus, $f(x)$ satisfies conditions $\CC_{10}$.

Conversely, suppose that $f(x)=x^{12}+ax^6+1$ satisfies conditions $\CC_{10}$. Then, from Theorem \ref{Thm:Main g_6}, we see that $g_6(x)$ is monogenic. Since $f(x)=g_6(x^2)$, it follows from Theorem \ref{Thm:KKR} that we can determine the monogenicity of $f(x)$ by examining the prime $q=2$ in Theorem \ref{Thm:JKS}. If $2\mid a$, then $4\mid a$ from conditions $\CC_{10}$, and in this case we see that $q=2$ satisfies condition \eqref{JKS:C2} of Theorem \ref{Thm:JKS} since $2\mid a_2$ and $b_1=1$. Thus, $2\nmid $ index($f$) and $f(x)$ is monogenic. If $2\nmid a$, then we must examine condition \eqref{JKS:C4} of Theorem \ref{Thm:JKS}. Note that $a\mmod 4=3$ from conditions $\CC_{10}$. We get 
\[H_1(x)=x^6+ax^3+1\equiv x^6+x^3+1 \pmod{2},\]
which is irreducible in $\F_2[x]$, and 
\begin{align*}
H_2(x)&=\frac{ax^6+1+(-ax^3-1)^2}{2}\\
&=\left(\frac{a^2+a}{2}\right)x^6+ax^3+1\\
&\equiv (x+1)(x^2+x+1) \pmod{2}.
\end{align*} It is then easy to see that $H_1(x)$ and $H_2(x)$ are coprime in $\F_2[x]$, so that $2\nmid$ index($f$) by Theorem \ref{Thm:JKS}, and $f(x)$ is monogenic. 

\subsection*{{\bf The Case} $\mathbf{12T11}$} We see from Theorem \ref{Thm:Chen} that $(G_4,G_6)=(4{\rm T}1,6{\rm T}3)$.
  By inspection of Theorem \ref{Thm:Main g_4} and Theorem \ref{Thm:Main g_6}, we conclude that there do not exist any monogenic trinomials $f(x)$ with $\Gal(f)\simeq$ 12T11. 
\subsection*{{\bf The Case} $\mathbf{12T12}$} Suppose that $\Gal(f)\simeq$ 12T12. Then, from Theorem \ref{Thm:Chen}, we have that $(G_4,G_6)=(4{\rm T}3,6{\rm T}3)$, such that 
either $-3b\delta$ is a square and $r(x)$ is reducible, or $-3b$ is a square and $b$ is a cube. 

Suppose that $f(x)$ is monogenic. Then, from Corollary \ref{Cor:KKR}), we have that $b$ and $\W$ are squarefree, and $g_2(x)$, $g_4(x)$ and $g_6(x)$ are all monogenic.

 Assume first that $-3b\delta$ is a square and $r(x)$ is reducible over $\Q$. Then $3\mid \mid b\delta$, and $-b(a^2-4b)/3$ is a square. Suppose that $2\nmid a$. If $3\mid b$, then since $b/3$ and $a^2-4b$ are squarefree, it follows that $-b/3=a^2-4b$, or equivalently, $a^2=11b/3$. Hence, $(a,b)\in \{(\pm 11,33)\}$, which contradicts Theorem \ref{Thm:Main g_6}. If $3\mid \delta$, then $-b=\delta/3$, or equivalently, $a^2=b$. Thus, $(a,b)\in \{(\pm 1,1)\}$. It is easy to see that the pair $(a,b)=(1,1)$ contradicts Theorem \ref{Thm:Main g_4}, while a straightforward check reveals that the pair $(a,b)=(-1,1)$ yields the monogenic trinomial $f(x)=x^{12}-x^6+1$. However, in this case, 
 \[r(x)=x^3-3bx+ab=x^3-3x-1\] is irreducible over $\Q$, which contradicts the fact that $\Gal(f)\simeq$ 12T12. (Indeed, we have already shown that $\Gal(f)\simeq$ 12T2 in this case.) Suppose then that $2\mid a$. If $3\mid b$, then $(-b/3)((a/2)^2-b)$ is a square. Since $-b/3$ and $(a/2)^2-b$ are squarfree, we have that $-b/3=(a/2)^2-b$, or equivalently, $(a/2)^2=2(b/3)$. Therefore, $(a,b)\in \{(\pm 4,6)\}$, which contradicts Theorem \ref{Thm:Main g_6}. If $3\mid \delta$, then $-b=((a/2)^2-b)/3$, or equivalently, $(a/2)^2=-2b$. Hence, $(a,b)\in \{(\pm 4,-2)\}$, which again contradicts Theorem \ref{Thm:Main g_6}. 
 
  Assume next that $-3b$ is a square and $b$ is a cube. Since $b$ is squarefree and $b$ is a cube, we have that $b=\pm 1$, which contradicts the fact that $-3b$ is a square. Hence, this final contradiction confirms that there do not exist any monogenic trinomials $f(x)$ with $\Gal(f)\simeq$ 12T12. 
  
  \subsection*{{\bf The Case} $\mathbf{12T13}$} Suppose that $\Gal(f)\simeq$ 12T13. From Theorem \ref{Thm:Chen}, we have that $(G_4,G_6)=(4{\rm T}3,6{\rm T}3)$, such that either $-3b$ is a square and $r(x)$ is reducible, or $-3b\delta$ is a square and $b$ is a cube.
   
   Suppose that $f(x)$ is monogenic. From Corollary \ref{Cor:KKR}), we have that $g_2(x)$, $g_4(x)$ and $g_6(x)$ are all monogenic, with $b$ and $\W$ squarefree. 
  
  Assume first that $-3b$ is a square and $r(x)$ is reducible over $\Q$. Since $b$ is squarefree, it follows that $b=-3$, which contradicts Theorem \ref{Thm:Main g_6}. 
  
  Assume next that $-3b\delta$ is a square and $b$ is a cube. The same arguments used in the case 12T12 yield a contradiction in every possible situation, and we omit the details.  Thus, there exist no 12T13-monogenic trinomials $f(x)$. 
  
  \subsection*{{\bf The Case} $\mathbf{12T14}$} Suppose that $\Gal(f)\simeq$ 12T14. From Theorem \ref{Thm:Chen}, we have that $(G_4,G_6)=(4{\rm T}3,6{\rm T}1)$. By inspection, we deduce from Theorem \ref{Thm:Main g_4} and Theorem \ref{Thm:Main g_6} that $(a,b)=(-1,1)$, so that $f(x)=x^{12}-x+1$. However, we have already shown that $\Gal(f)=$ 12T2. Hence, there are no monogenic trinomials $f(x)$ in this case.  
  
  \subsection*{{\bf The Case} $\mathbf{12T15}$} Suppose that $\Gal(f)\simeq$ 12T15. From Theorem \ref{Thm:Chen}, we have that $(G_4,G_6)=(4{\rm T}3,6{\rm T}2)$. However, there are no 6T2-monogenic trinomials $g_6(x)$ by Theorem \ref{Thm:Main g_6}, and therefore, there are no 12T15-monogenic trinomials $f(x)$ by Theorem \ref{Thm:KKR}. 
  
  \subsection*{{\bf The Case} $\mathbf{12T16}$} Suppose that $\Gal(f)\simeq$ 12T16. From Theorem \ref{Thm:Chen}, we have that $(G_4,G_6)=(4{\rm T}2,6{\rm T}9)$, such that either $3\alpha$ or $3\beta$ is a square. 
  
  If $f(x)$ is monogenic, then  $g_4(x)$ is monogenic by Corollary \ref{Cor:KKR}. Then $a \mmod{4}\in \{0,3\}$ and $b=1$ from Theorem \ref{Thm:Main g_4}. Hence, 
  \[3\alpha \mmod{4}=3(a+2) \mmod{4} \in \{2,3\} \quad \mbox{and} \quad 3\beta \mmod{4}=3(a-2) \mmod{4}\in \{2,3\},\]
  so that neither $3\alpha$ nor $3\beta$ is a square. This contradiction shows that no 12T16-monogenic trinomials $f(x)$ exist. 
  
   \subsection*{{\bf The Case} $\mathbf{12T18}$} Suppose that $\Gal(f)\simeq$ 12T18. From Theorem \ref{Thm:Chen}, we have that $(G_4,G_6)=(4{\rm T}2,6{\rm T}5)$. 
   
   If $f(x)$ is monogenic, then $g_4(x)$ and $g_6(x)$ are monogenic by Corollary \ref{Cor:KKR}. Hence, $b=1=(a^2+3)/4$ from Theorem \ref{Thm:Main g_4} and Theorem \ref{Thm:Main g_6}. Thus, $a=\pm 1$, which contradicts Theorem \ref{Thm:Main g_6}. Hence, there exist no 12T18-monogenic trinomials $f(x)$.    
   
   \subsection*{{\bf The Case} $\mathbf{12T28}$} From Theorem \ref{Thm:Chen}, we have that $(G_4,G_6)=(4{\rm T}3,6{\rm T}3)$. Furthermore, to ensure that $\Gal(f)\simeq$ 12T28, we must have from Theorem \ref{Thm:Chen} that the negation of the statement 
   \[Y:=[(P\land R) \lor (Q\land S)]\lor [(Q\land R) \lor (P\land S)]\] is true, where $P$, $Q$, $R$ and $S$ are the statements as given in \eqref{PQRS}. The seven possibilities for the truth values of the statements $P$, $Q$, $R$ and $S$ for which $\neg Y$ is true are given in Table \ref{T:28}. 
   \begin{table}[h]
 \begin{center}
\begin{tabular}{c|ccccccc}
$P$ & T & T & F & F & F & F & F   \\ 
$Q$ & T & F & T & F & F & F & F  \\
$R$ & F & F & F & T & T & F & F  \\
$S$ & F & F & F & T & F & T & F
\end{tabular}
\end{center}
\caption{Truth values of $P$, $Q$, $R$ and $S$ for which $\neg Y$ is true}
 \label{T:28}
\end{table} 

Suppose that $f(x)$ is monogenic. Then $g_2(x)$, $g_4(x)$ and $g_6(x)$ are all monogenic, with $b$ and $\W$ squarefree, from Corollary \ref{Cor:KKR}). 
We use Theorem \ref{Thm:Main g_4} and Theorem \ref{Thm:Main g_6} with $(G_4,G_6)=(4{\rm T}3,6{\rm T}3)$ to impose further restrictions on statements $P$, $Q$, $R$ and $S$ to determine the possibilities in Table \ref{T:28} that could actually yield monogenic trinomials $f(x)$, and to derive necessary conditions for the monogenicity of $f(x)$. 

 Observe that $b\ne 1$ from Theorem \ref{Thm:Main g_4} and $b\in \{\pm 1,2\}$ from Theorem \ref{Thm:Main g_6}. Thus, $b\in \{-1,2\}$, and 
 \[-3b\delta=\left\{\begin{array}{cl}
   3(a^2+4) & \mbox{if $b=-1$},\\
   -6(a^2-8)  & \mbox{if $b=2$.}
 \end{array}
 \right.\]
If $3(a^2+4)$ is a square, then $3\mid (a^2+4)$ and we arrive at the contradiction that  $a^2\equiv 2 \pmod{3}$. The same contradiction is reached if we assume that $-6(a^2-8)$ is a square. Hence, statement $P$ is false, which eliminates the first two possibilities in Table \ref{T:28}. A similar argument, which we omit, shows that statement $Q$ is false, eliminating the third possibility in Table \ref{T:28}.
  
If $b=2$, then statement $S$ is false, and $a=\pm 2$ from Theorem \ref{Thm:Main g_6}. Note also that 
\[r(x)=x^3-6x\pm 4=(x\mp2)(x^2\pm2x-2),\] so that statement $R$ is true. Thus, we get that $f(x)=x^{12}\pm 2x^6+2$.

If $b=-1$, then statement $S$ is true. Moreover, for $a\ne 0$, it is easy to see that $r(x)$ is reducible if and only if $a=n^3+3n$ for $n\in \Z$, and in this case we have
\[r(x)=(x-n)(x^2+nx+n^2+3),\] which implies that 
\begin{align*}
f(x)&=x^{12}+(n^3+n)x^6-1\\
&=(x^4+nx^2-1)(x^8-nx^6+n^2x^4+x^4+nx^2n+1).
\end{align*} Thus, we can assume that $r(x)$ is irreducible so that statement $R$ is false.  
In this situation, we have that $g_4(x)\in \FF_1$ from Theorem \ref{Thm:Main g_4} and $g_6(x)\in \FF_4$ from Theorem \ref{Thm:Main g_6}, such that $a\mmod{4}\ne 0$ and $a\mmod{9}\not \in \{0,4,5\}$. That is, we have shown that if $f(x)$ is monogenic, then $f(x)\in \{x^{12}+2x^6+2,\ x^{12}-2x^6+2\}$ or $f(x)=x^{12}+ax^6-1$, where $a$ satisfies the conditions $\CC_{28}$. We have also established that the last possibility in Table \ref{T:28} cannot occur when $f(x)$ is monogenic. 

Conversely, suppose that $f(x)=x^{12}\pm 2x^6+2$ or $f(x)=x^{12}+ax^6-1$, where $a$ satisfies the conditions $\CC_{28}$. It is easy to see that each of the irreducible (2-Eisenstein) trinomials  $f(x)=x^{12}+2x^6+2$ and  $f(x)=x^{12}-2x^6+2$ satisfies condition \eqref{JKS:C1} of Theorem \ref{Thm:JKS} with $q=2$, which implies that they are both monogenic by Theorem \ref{Thm:KKR} since $g_6(x)$ is monogenic in each case by Theorem \ref{Thm:Main g_6}, and $f(x)=g_6(x^2)$. 

Suppose then that $f(x)=x^{12}+ax^6-1$, where $a$ satisfies the conditions $\CC_{28}$. We use Theorem \ref{Thm:JKS} to examine the monogenicity of $f(x)=x^{12}+ax^6-1$. Suppose first that $q=2$ divides $a$. Then, by condition \eqref{JKS:C2} of Theorem \ref{Thm:JKS}, $2\nmid$ index($f$) since $4\nmid a$. If $2\nmid a$, then in condition \eqref{JKS:C4} of Theorem \ref{Thm:JKS}, we have that 
\[H_1(x)=x^6+ax^3+1 \quad \mbox{and}\quad H_2(x)=x^3\left(\left(\frac{a^2+a}{2}\right)x^3-a\right).\] Since $2\nmid a$, we have that $H_1(x)$ is irreducible in $\F_2[x]$, which implies that $H_1(x)$ and $H_2(x)$ are coprime in $\F_2[x]$. Thus, $2\nmid$ index($f$). Suppose next that $q=3$ is a divisor of $a$. Then, we see from condition \eqref{JKS:C2} of Theorem \ref{Thm:JKS} that $3\nmid$ index($f$) since $a\not \equiv 0 \pmod{9}$. If $3\nmid a$, then we see by inspection of Table \ref{T:H1 H2 q=3}, generated from condition \eqref{JKS:C4} of Theorem \ref{Thm:JKS} with $a \mmod{9}\in \{1,2,7,8\}$, that $3\nmid$ index($f$), which verifies that $f(x)$ is monogenic. 
  \begin{table}[h]
 \begin{center}
\begin{tabular}{ccc}
$a \mmod{9}$ & $H_1(x) \mmod{3}$ & $H_2(x)\mmod{3}$   \\ \hline
1 & $x^4+x^2+2$ & $x^2(x+1)(x+2)$ \\
2 & $x^4+2x^2+2$ & $x^2(x+1)^2(x+2)^2$ \\
7 & $x^4+x^2+2$ & $2x^2(x^2+1)^2$ \\
8 & $x^4+2x^2+2$ & $x^2(x^2+1)$.
\end{tabular}
\end{center}
\caption{Factorization of $H_i(x)$ into irreducibles in $\F_3[x]$}
 \label{T:H1 H2 q=3}
\end{table}\\

 \subsection*{{\bf The Case} $\mathbf{12T37}$} Suppose that $\Gal(f)\simeq$ 12T37. 
   From Theorem \ref{Thm:Chen}, we have that $(G_4,G_6)=(4{\rm T}2,6{\rm T}9)$, where neither $3\alpha$ nor $3\beta$ is a square. 
   
   Suppose that $f(x)$ is monogenic. Then we have from Corollary \ref{Cor:KKR}) that $g_2(x)$, $g_4(x)$ and $g_6(x)$ are all monogenic, with $b$ and $\W$ squarefree. From Theorem \ref{Thm:Main g_4}, we see that $a\mmod{4}\in \{0,3\}$ and $b=1$. Consequently, both $a-2$ and $a+2$ are squarefree since $\W$ is squarefree. Note also that if $a=-1$, then $f(x)=x^{12}-x^6+1$ so that $\Gal(f)\simeq$ 12T2. Hence, $a\ne \pm 1$. Furthermore, $a\mmod{9}=0$, otherwise we would have $g_6(x)\in \FF_3$, so that $G_6=6{\rm T}3$ by Theorem \ref{Thm:Main g_6}, contradicting the fact that $G_6=6{\rm T}9$. 
   
   Since 3 divides $\Delta(f)=2^{12}3^{12}(a-2)^6(a+2)^6$,  $a\mmod{9}=0$ and $b=1$, we know that condition \eqref{JKS:C2} of Theorem \ref{Thm:JKS} must be satisfied with $q=3$. However, $3\mid a_2$ and $b_1=0$ in condition \eqref{JKS:C2} of Theorem \ref{Thm:JKS}, which implies that $3\mid$ index($f$), and contradicts the assumption that $f(x)$ is monogenic. Hence, there do not exist any 12T37-monogenic trinomials $f(x)$. 

 \subsection*{{\bf The Case} $\mathbf{12T38}$} Suppose that $\Gal(f)\simeq$ 12T38. From Theorem \ref{Thm:Chen}, we have that $(G_4,G_6)=(4{\rm T}3,6{\rm T}9)$, where $-3b$ or $-3b\delta$ is a square. If $-3b$ and $-3b\delta$ are both squares, then $\delta$ is a square, which contradicts the fact that $f(x)$ is irreducible. Hence, $-3b$ and $-3b\delta$ cannot both be squares. 
  
  Suppose that $f(x)$ is monogenic. Then we have from Corollary \ref{Cor:KKR}) that $g_2(x)$, $g_4(x)$ and $g_6(x)$ are all monogenic, with $b$ and $\W$ squarefree.
  
  Assume first that $-3b$ is a square. Since $b$ is squarefree, it follows that $b=-3$, so that   $a\mmod{4}\in \{0,3\}$ from Theorem \ref{Thm:Main g_4}. Note then that $b\delta=-3(a^2+12)$ is not a square. We need to determine the precise conditions under which $g_6(x)=x^6+ax^3-3$ is monogenic. To do this, we only need to examine the prime $q=3$ in Theorem \ref{Thm:JKS} by Theorem \ref{Thm:KKR}, since $g_2(x)=x^2+ax-3$ is monogenic and $g_6(x)=g_2(x^3)$. Furthermore, since $g_6(0)=-3$ is squarefree, we only have to analyze condition \eqref{JKS:C3} of Theorem \ref{Thm:JKS}, where we have 
 \[a_1=\frac{a-a^3}{3} \quad \mbox{and} \quad b_2=-1.\] We see that $3\mid a_1$ if and only if $a\mmod{9}\in \{0,1,8\}$, and that 
 \[3\nmid a_1(aa_1+1) \quad \mbox{if and only if} \quad a\mmod{9}\in \{3,4,5,6\}.\] Thus, it follows that $g_6(x)$ and, consequently, $f(x)$ are monogenic if and only if $a \mmod{9}\not \in \{2,7\}$.

 Assume next that $-3b\delta$ is a square. From the arguments used in the case 12T12, we have that 
 \begin{equation}\label{(a,b) 12T38}
 (a,b)\in \{(\pm 1,1),\ (\pm 4,-2),\ (\pm 4,6), \ (\pm 11,33)\}.
 \end{equation} It is easy to verify that $(a,b)=(1,1)$ is impossible since $f(x)$ is reducible in this situation, while $(a,b)=(-1,1)$ gives $f(x)$ with $\Gal(f)\simeq$ 12T2. With $(a,b)=(\pm 11,33)$,  we see that condition \eqref{JKS:C3} of Theorem \ref{Thm:JKS} fails for $g_6(x)$ with $q=3$. Thus, neither one of the two trinomials $f(x)=x^{12}\pm 11x^6+33=g_6(x^2)$ is monogenic by Theorem \ref{Thm:KKR}. Straightforward calculations using Theorem \ref{Thm:JKS}, or a computer algebra system, verify that the remaining four possibilities in \eqref{(a,b) 12T38} yield 12T38-monogenic trinomials $f(x)$. 
 
 Conversely, suppose that $f(x)=x^{12}+ax^6-3$, such that $a$ satisfies the conditions in $\CC_{38}$. Then, 
 \[(a\mmod{4},\ b\mmod{4})\in \{(0,1),\ (3,1)\}\] and $b\delta=-3(a^2+12)<0$ is not a square. With the calculations done in the other direction of this proof, it follows that $g_4(x)$ is monogenic with $G_4=$ 6T3 and $g_6(x)$ is monogenic with $G_6=$ 6T9, so that $f(x)$ is monogenic by Theorem \ref{Thm:KKR}.

 \subsection*{{\bf The Case} $\mathbf{12T39}$} Suppose that $\Gal(f)\simeq$ 12T39. From Theorem \ref{Thm:Chen}, we have that $(G_4,G_6)=(4{\rm T}1,6{\rm T}9)$. If $f(x)$ is monogenic, then $g_4(x)$ is monogenic from Corollary \ref{Cor:KKR}. Hence, 
  \[f(x)\in S:=\{x^{12}\pm 4x^6+2,\ x^{12}-5x^6+5\}\]  from Theorem \ref{Thm:Main g_4}. Then, using Theorem \ref{Thm:JKS} or a computer algebra system, it is straightforward to confirm that each $f(x)\in S$ is monogenic with $\Gal(f)\simeq$ 12T39.

\subsection*{{\bf The Case} $\mathbf{12T42}$} Suppose that $\Gal(f)\simeq$ 12T42. Then, from Theorem \ref{Thm:Chen}, we have that $(G_4,G_6)=(4{\rm T}3,6{\rm T}5)$. 

Suppose that $f(x)$ is monogenic. Then $g_2(x)$, $g_4(x)$ and $g_6(x)$ are all monogenic, with $b$ and $\W$ squarefree, from Corollary \ref{Cor:KKR}). 
We deduce then from Theorem \ref{Thm:Main g_4} and Theorem \ref{Thm:Main g_6} that $b=(a^2+3)/4$ is squarefree and
\[(a \mmod{4},\ b \mmod{4})\in \{(1,3), (3,1),(3,3)\},\] with $a\ne \pm 1$. By Theorem \ref{Thm:KKR}, since $f(x)=g_6(x^2)$, we only need to focus on $q=2$ in Theorem \ref{Thm:JKS} to ascertain further restrictions on $a$ to guarantee that $f(x)$ is monogenic. From condition \eqref{JKS:C4} of Theorem \ref{Thm:JKS} we get
\begin{align*}\label{H1 H2 42}
 H_1(x):&=x^6+ax^3+(a^2+3)/4 \quad \mbox{and}\\ 
 H_2(x):&=\left(\frac{a(a+1)}{2}\right)x^6+\left(\frac{a(a^2+3)}{4}\right)x^3+\frac{(a^2+3)(a^2+7)}{32}. 
\end{align*}  
Since $2\nmid a$ and $2^2\mid\mid (a^2+3)$, we have that $H_1(x)\equiv x^6+x^3+1 \mmod{2}$ is irreducible in $\F_2[x]$. Thus, $2\mid$ index($f$) if and only if $H_2(x)\equiv H_1(x) \mmod{2}$. Note that the coefficient on $x^3$ in both $H_2(x)$ and $H_1(x)$ modulo 2 is always 1, while equating the coefficients on $x^6$ and the constant terms modulo 2 yields 
\begin{equation}\label{cong}
\frac{a^2+a}{2} \equiv 1 \mmod{2} \quad \mbox{and} \quad \frac{a^2+7}{8} \equiv 1\mmod{2}.
\end{equation}  The first congruence in \eqref{cong} is true if and only if $a\mmod 4=1$, while the second congruence is true if and only if $a\mmod 16\in \{1,7,9,15\}$. Hence, for both congruences in \eqref{cong} to hold simultaneously, we must have $a\mmod{8}=1$. We conclude that $2\nmid$ index($f$) if and only if $a\mmod{8} \in \{3,5,7\}$. Hence, $a$ satisfies the conditions in $\CC_{42}$.

Conversely, suppose that $f(x)=x^{12}+ax^6+(a^2+3)/4$, where $a$ satisfies the conditions in $\CC_{42}$. Observe also that $\W=-3$ is squarefree, and $a\ne 1$, since otherwise $f(x)$ is reducible over $\Q$. Furthermore, $b\delta=-3(a^2+3)/4<0$ is not a square.  Thus, $g_4(x)$ is monogenic with $G_4=$ 4T3 and $g_6(x)$ is monogenic with $G_6=$ 6T5. Since $f(x)=g_6(x^2)$, it follows from Theorem \ref{Thm:KKR} that $f(x)$ is monogenic form the calculations done in the previous direction of the proof for condition \ref{JKS:C4} of Theorem \ref{Thm:JKS} with $q=2$.        

 \subsection*{{\bf The Case} $\mathbf{12T81}$} Suppose that $\Gal(f)\simeq$ 12T81. From Theorem \ref{Thm:Chen}, we have that $(G_4,G_6)=(4{\rm T}3,6{\rm T}9)$, where neither $-3b$ nor $-3b\delta$ is a square.

 Suppose that $f(x)$ is monogenic. Then, from Corollary \ref{Cor:KKR}), $g_2(x)$, $g_4(x)$ and $g_6(x)$ are all monogenic, with $b$ and $\W$ squarefree. From Theorem \ref{Thm:Main g_4}, we have that $b\ne 1$, $b\delta$ is not a square and $(a \mmod{4},\ b \mmod{4})\in \RR$. 
  By Theorem \ref{Thm:KKR}, since $g_6(x)=g_2(x^3)$, and $g_2(x)$ is monogenic from Theorem \ref{Thm:Main g_2}, we can use Theorem \ref{Thm:JKS} with $q=3$ to determine precise conditions under which $g_6(x)$ is monogenic. Observe that condition \eqref{JKS:C5} of Theorem \ref{Thm:JKS} is satisfied since $\W$ is squarefree. If $3\mid a$ and $3\mid b$, then condition \eqref{JKS:C1} of Theorem \ref{Thm:JKS} is satisfied if and only if $(a \mmod{9},\ b \mmod{9})$ is an element of $\SSS_1$, since $b$ is squarefree. If $3\mid a$ and $3\nmid b$, then straightforward calculations show that condition \eqref{JKS:C2} of Theorem \ref{Thm:JKS} is satisfied if and only if $(a \mmod{9},\ b \mmod{9})\in \SSS_2$.
 Similarly, if $3\nmid a$ and $3\mid b$, then it is easy to see that condition \eqref{JKS:C3} of Theorem \ref{Thm:JKS} is satisfied if and only if $(a \mmod{9},\ b \mmod{9})\in \SSS_3$. 
 Finally, we examine condition \eqref{JKS:C4} of Theorem \ref{Thm:JKS} to get that 
 \[H_1(x)=x^2+ax+b \quad \mbox{and} \quad H_2(x)=\left(\frac{a-a^3}{3}\right)x^3-a^2bx^2-ab^2x+\frac{b-b^3}{3}\] are coprime in $\F_3[x]$ if and only if $(a \mmod{9},\ b \mmod{9})\in \SSS_4$. 
  
 The converse is straightforward and follows by utilizing the calculations done previously in the other direction of the proof of this case.  
 \end{proof}








\end{document}